\theoremstyle{plain}
\newtheorem{theorem}{Theorem}[section]
\newtheorem{lemma}[theorem]{Lemma}
\newtheorem{proposition}[theorem]{Proposition}
\theoremstyle{definition}
\newtheorem{definition}[theorem]{Definition}
\theoremstyle{remark}
\newtheorem{remark}[theorem]{Remark}
\newcommand{\C}{\mathbb{C}} 
\newcommand{\N}{\mathbb{N}}
\begin{document} 

\title{Embeddings of   $4$--manifolds in $\mathbb{C}P^3$}

\author{Abhijeet Ghanwat}

\address{ Chennai Mathematical Institute\\
          H1, SIPCOT IT Park,\\
		Kelambakkam (Chennai), India.}

\email{abhijeet@cmi.ac.in}

\author{Dishant M. Pancholi}
\address{ The Institute of Mathematical Sciences\\
          No. 8, CIT Campus, Taramani, \\
	   Chennai 600113, India.}
\email{dishant@imsc.res.in}

\date{\today}

\subjclass{Primary ; Secondary }

\begin{abstract}
In this article we show that every closed orientable smooth $4$--manifold admits a smooth embedding in the complex projective $3$--space. We also provide a new proof of embeddings of $4$--manifolds in 
$\mathbb{R}^7.$
\end{abstract}
\maketitle

\section{Introduction}
A basic question in the field of geometric topology  which 
concerns embeddings of manifolds,  can be  stated  as follows: Given a pair of manifolds $M$ and $N,$ how many smooth embeddings of $M$ exist in  $N?$

Detailed investigations in this regard have  led to the discovery of  interesting invariants of manifolds. One of the earliest seminal results in this context 
is due to  H. Whitney who showed that every closed manifold of dimension $n$ admits an embedding in $\mathbb{R}^{2n}.$ Subsequently, this result has been extensively generalized. Most notably, M. Hirsch  showed \cite{Hi} that every closed orientable 
odd--dimensional manifold $M^{2n-1}$ admits a smooth embedding in 
$\mathbb{R}^{4n-3}.$  This result, together with those by C.T.C Wall and V. Rokhlin implies that every closed $3$--manifold admits an embedding in $\mathbb{R}^5.$

For closed even dimensional  manifolds, combining 
results of A. Haefliger~\cite{Ha}, A. Haefliger and M. Hirsch~\cite{HH}, and W. Massey and F. Peterson~\cite{MP}, one knows that every orientable $n$--manifold embeds in $\mathbb{R}^{2n-1}$ when  $n>4,$ and  if $n$ is not a power of two, then  every n-manifold embeds in  
$\mathbb{R}^{2n-1}.$
For   $4$--manifolds it was shown by M. Hirsch 
 \cite{Hi1} and C. T. C. Wall\footnote{ M. Hirsch has mentioned in \cite{Hi1} that
 C. T. C. Wall had independently proved this result.}  that every orientable PL $4$--manifold admits a PL embedding in $\mathbb{R}^7.$

It is usually possible to construct  an invariant of a manifold $M$ using its embeddings in a manifold $N,$  provided that $(1)$ the topology of $N$ is  relatively simple and  $(2)$ the co-dimension of the embedding of  $M$ in $N$ is small. 
The importance of these two conditions is evident even from the examples 
of  embeddings of surfaces. We recall that there exists an embedding of a  closed smooth surface $\Sigma$ in $\mathbb{R}^3$ if and only if  
$\Sigma$ is orientable. This clearly shows that the orientability of a smooth closed surface  can be captured by its embeddability in  Euclidean $3$--space. 
Further, the  embeddability  of every closed surface  in 
$\mathbb{R}^4$ demonstrates  the importance of lower co-dimension of  embeddings, 
while the fact that  $\mathbb{R}P^3 \# \mathbb{R}P^3$ admits an embedding of every closed surface shows the need for   a relatively  simple topology for the target space.

It was shown by S. Cappell and J. Shaneson \cite{CS} that a smooth $4$--manifold admits a smooth embedding in $\mathbb{R}^6$ if and only if it admits a spin structure.  We know that a closed orientable $4$--manifold is spin if and only if the second Stiefel-Whitney class $w_2(M)$ is zero.  In particular, this implies that 
$\mathbb{C}P^2$ does not smoothly embed in $\mathbb{R}^6.$
In this article, we  investigate whether there exist  topologically simple closed $6$--dimensional manifolds which admit embeddings of all smooth $4$--manifolds. 

Two important classes of closed orientable smooth $4$--manifolds are symplectic $4$--manifolds and smooth algebraic surfaces.
Their embeddings in various complex projective spaces have been extensively examined
(see, for instance \cite{BHPV},\cite{Do1}, and \cite{Do}), and the question of their embeddability in $\mathbb{C}P^3$ is very important. Furthermore, the topology of $\mathbb{C}P^3$ is very simple and $\mathbb{C}P^2$ naturally embeds in $\mathbb{C}P^3.$ 
We therefore investigate embeddings of  $4$--manifolds in $\C P^3$ and establish the following:

 \begin{theorem}\label{thm:embedding_4-manifolds}
Every closed orientable smooth $4$--manifold admits a smooth embedding in 
$\mathbb{C}P^3.$ 
\end{theorem}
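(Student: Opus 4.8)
The plan is to build the embedding from two features of $\C P^3$. First, $\C P^3$ contains the affine chart $\C^3\cong\R^6$ as an open subset; second, $\C P^3$ is spin, since with $a$ the generator of $H^2(\C P^3;\Zz)$ one has $w(\C P^3)=(1+a)^4\equiv 1\pmod 2$, so every $w_i(\C P^3)$ vanishes. The second fact settles the spin case at once: if $M$ is spin then $w_2(M)=0$, and by the theorem of Cappell and Shaneson \cite{CS} such an $M$ embeds smoothly in $\R^6=\C^3\subset\C P^3$. All the difficulty is therefore concentrated in the non-spin case, and my strategy is the higher-dimensional analogue of the surface picture recalled in the introduction: a non-orientable surface does not embed in $\R^3$ but embeds in $\R P^3$ because a neighbourhood of the curve dual to $w_1$ can be made to wind around $\R P^1\subset\R P^3$. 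Here $w_2(M)$ takes the role of $w_1$ and a copy of $\C P^1\subset\C P^2\subset\C P^3$ takes the role of $\R P^1$.

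So assume $M$ is not spin. The Whitney sum formula $w(M)\,w(\nu)=i^*w(\C P^3)=1$ shows that any smooth embedding $i\co M\hookrightarrow\C P^3$ has oriented rank--two normal bundle $\nu$ with $w_2(\nu)=w_2(M)$; conversely, the construction must \emph{manufacture} this non-trivial normal class from the topology of $\C P^3$. The first step is to localise it. Working mod $2$, the Poincar\'e dual of $w_2(M)\in H^2(M;\Zz)$ is represented by an embedded closed surface $\Sigma\subset M$ (which may be taken orientable whenever $w_2(M)$ admits an integral lift). Since $[\Sigma]$ is dual to $w_2(M)$, the Thom class of a tubular neighbourhood $\nbd(\Sigma)$ represents $w_2(M)$ and restricts to zero on the complement; hence $w_2$ vanishes on $M_0:=M\setminus\mathrm{int}\,\nbd(\Sigma)$, so the compact manifold $M_0$ is orientable and spin.

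The second step is to embed the two pieces separately. For the spin piece $M_0$ I would adapt the Cappell--Shaneson argument to the bounded case, embedding $M_0$ in $\C^3=\R^6$ and fixing a normal framing along $\del M_0$. For $\nbd(\Sigma)$, which is a $2$--disk bundle over $\Sigma$, I would first embed $\Sigma$ in the hyperplane $\C P^2\subset\C P^3$ (which contains every closed surface) and then thicken it in the direction normal to $\C P^2$. That normal direction is spanned by the line bundle $\mathcal O(1)$, the geometric carrier of the generator $a$, and the thickening can be arranged so that the resulting normal bundle of $\nbd(\Sigma)$ in $\C P^3$ has $w_2$ equal to $w_2(M)|_{\nbd(\Sigma)}$. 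This is exactly the analogue of a M\"obius neighbourhood winding once around $\R P^1$.

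The main obstacle, and the step I expect to be hardest, is gluing the two embeddings along their common boundary $\del M_0=\del\,\nbd(\Sigma)$, the sphere bundle of $\nu|_\Sigma$. The embedding of $M_0$ produces a trivialised normal framing there, whereas the embedding coming from $\C P^2$ produces a framing twisted by the restriction of $\mathcal O(1)$; the two framings differ precisely by the class we localised on $\Sigma$, so the matching data is consistent by design. What is not automatic is realising the interpolation between them as an honest smooth embedding, for codimension two lies outside the metastable range and no general position or $h$--principle argument is available. Concretely one must construct an explicit collar in which the $\C^3$--picture is carried around $\C P^1$ and handed off to the $\C P^2$--picture, with the normal framings interpolated and the collar kept disjoint from the remaining pieces; verifying that such a collar embeds is where the argument must be made rigorous. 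Once it is in place, the union of the embeddings of $M_0$, of $\nbd(\Sigma)$, and of the interpolating collar is the desired smooth embedding $M\hookrightarrow\C P^3$.
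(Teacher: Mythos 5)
Your reduction is fine as far as it goes: the spin case is immediate from Cappell--Shaneson, a surface $\Sigma$ dual to $w_2(M)$ exists (and can be taken orientable since closed orientable $4$--manifolds are spin$^c$), and $w_2(M)$ restricts to zero on $M_0=M\setminus\mathrm{int}\,\nbd(\Sigma)$ by exactness for the pair $(M,M\setminus\Sigma)$, so $M_0$ is spin. But the two steps that would actually produce the embedding are missing, and they are exactly where the difficulty of the theorem is concentrated. First, Cappell--Shaneson is a theorem about \emph{closed} manifolds; you need a relative version that embeds the bounded manifold $M_0$ in $\R^6$ so that its restriction to $\del M_0$ (a circle bundle over $\Sigma$) is a \emph{prescribed} embedding with prescribed collar and normal data, matching the embedding constructed on the $\nbd(\Sigma)$ side. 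Their surgery-theoretic argument does not give such boundary control, and you do not supply it; note also that $w_2(\nu)=0$ on $M_0$ gives an even Euler class, not a trivialised normal framing along $\del M_0$ as you assert. Second, the gluing step, which you yourself flag as ``where the argument must be made rigorous,'' is not a verification but the actual problem: in codimension two there is no general-position or $h$--principle argument, and agreement of characteristic-class data (normal $w_2$, framings differing by $\mathcal{O}(1)$) does not imply that the two boundary embeddings of $\del M_0$ are ambiently isotopic --- codimension-two embeddings can be knotted with identical bundle data (e.g.\ knotted $3$--spheres in $\mathbb{S}^5$). Without an actual isotopy, not merely a bundle isomorphism, identifying the two embeddings of a collar of $\del M_0$, the union of your pieces is not an embedding, so the proposal is incomplete at its central point.

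For contrast, the paper avoids any such gluing-along-a-$3$--manifold problem by a different route: it constructs a special simplified broken Lefschetz fibration on $M\#\C P^2\#\overline{\C P^2}$ (Lemma~\ref{lem:existence_of_special_SBLF}), embeds that manifold \emph{fiberwise} into $\mathcal{B}_{\C P^1}(\C P^3)\cong\C P^2\times\C P^1$ using flexible embeddings of surfaces in standard position (Theorem~\ref{thm:Lef_fib_embedding} and Lemma~\ref{lem:simple_singular_fibration_embedding}), arranges each fiber to meet the standard $\C P^1\subset\C P^2$ in a pair of algebraically cancelling points, and then blows down the exceptional divisor to obtain $M\subset\C P^3$ (Lemma~\ref{lem:SBLP_embedding}). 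There the pieces are matched fiberwise over $\C P^1$, where flexibility (realising mapping classes by ambient isotopies of the fiber $N$) supplies precisely the isotopies that your decomposition lacks; the non-spin obstruction is absorbed by the $\#\C P^2\#\overline{\C P^2}$ and removed again by the blow-down, rather than localised on a $w_2$-dual surface.
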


To the best of our knowledge, Theorem~\ref{thm:embedding_4-manifolds} and 
Theorem~\ref{thm:embedding_in_CP^2_times_CP^1}, which establishes embedding of $4$--manifolds in certain $6$--manifolds of the type $N \times \C P^1$,  are   the only  results 
establishing the existence of closed $6$--manifolds in which all orientable smooth 
$4$--manifolds embed.  

The central idea for the proof of Theorem~\ref{thm:embedding_4-manifolds} is drawn
from a well--known fact that given a projective embedding of  a smooth algebraic surface, the standard \emph{Lefschetz pencil} of the complex projective space generically induces a Lefschetz pencil structure on the surface. It was established by I. Baykur and  O. Saeki  \cite{BO} that every smooth $4$--manifold admits a  \emph{simplified broken Lefschetz fibration} (SBLF), 
which can be regarded as a natural  generalization of the Lefschetz pencil for an arbitrary smooth $4$--manifold.  This decomposition allows us to express any smooth $4$--manifold as a singular fiber bundle over $\C P^1$ with a finite number of
 \emph{Lefschetz singularities} and a unique  \emph{fold singularity}. 
The advantage of this decomposition is that we can  associate 
with any smooth $4$--manifold certain  data which comprise two constituents. 
These are: (1) an element of the \emph{mapping class group} of a closed  orientable surface of genus $g$  expressed as a product of \emph{Dehn twists}, corresponding to 
Lefschetz singularities, and (2) a  round handle attachment corresponding to  the fold singularity.

Let us now briefly outline the argument establishing Theorem~\ref{thm:embedding_4-manifolds}.  We 
need Theorem~\ref{thm:embedding_in_CP^2_times_CP^1}  to establish Theorem~\ref{thm:embedding_4-manifolds}. Hence, we begin by outlining a proof of 
Theorem~\ref{thm:embedding_in_CP^2_times_CP^1}.

Consider  any closed orientable $4$--manifold $N$ which admits an embedding of a Hopf link
which is \emph{separable} in the sense of Definition~\ref{def:separable_Hopf_link}, by which we mean 
that $N$ admits a handle decomposition that satisfies the following property:  the 
boundary of a $0$--handle has a Hopf link, which is a slice in the complement of the $0$--handle. 
In the following discussion we fix one such $4$--manifold $N.$

Given a closed orientable smooth $4$--manifold $M$,  consider 
the manifold $M$ together with any given 
SBLF. The first step is to produce an embedding $f$ of $M$
in $N \times \C P^1$ such that the trivial  fibration 
$\pi_2: N \times \C P^1 \rightarrow \C P^1$ of $N \times \C P^1$ induces the given  SBLF.

The three important steps for constructing the embedding $f$ are the following: 
In the first step, using an  appropriate generalization of techniques 
from \cite{PPS}, and  a specific local embedding model
for a given  Lefschetz singularity, we provide an embedding of genus $g+1$ 
\emph{Lefschetz sub--fibration} over a disk $\mathbb{D}^2$ in 
$N \times \mathbb{D}^2,$
which is associated with the given SBLF. 
This embedding is   such that the trivial product fibration 
$\pi_2: N \times \mathbb{D}^2 \rightarrow \mathbb{D}^2$ induces  the given \emph{Lefschetz fibration}. This is the most important  step in the proof, and is
detailed  in Section~\ref{sec:lef_fib_embedding}.  In fact, in Section~\ref{sec:lef_fib_embedding}  we show how to embed any Lefschetz fibration over a disk
or $\C P^1$ in a trivial fibration over  $\C P^1$ with fiber $N.$

Next, we use a local  embedding model for fold singularities to produce an
embedding of a sub-manifold $(\widetilde{M},\partial \widetilde{M}) \subset M$ (having two disjoint boundary components) 
in $N \times I \times \mathbb{S}^1.$ This embedding is constructed such that it
agrees with the embedding in the first step near one of the boundary
components of $\widetilde{M},$ and is  a trivial fibration $\Sigma_g \times S^1$ 
near the other boundary component of $\widetilde{M}.$ Here, $\Sigma_g$ denotes
a surface of genus $g.$  This provides us with a fiber preserving  embedding of 
$M  \setminus \Sigma_g \times \mathbb{D}^2$ in 
$N \times \mathbb{D}^2.$  Finally, we extend the embedding 
of $M \setminus \Sigma_g \times \mathbb{D}^2$ 
in $N \times \mathbb{D}^2$  using an  embedding of $\Sigma_g \times \mathbb{D}^2$ in $N \times \mathbb{D}^2$ to obtain  the  embedding  $f:M \hookrightarrow N \times \C P^1.$  These two steps are discussed in Section~\ref{sec:embed_in_product}. Embeddings of $M$ in $N \times \C P^1$ is
the content of Theorem~\ref{thm:embedding_in_CP^2_times_CP^1}. 
 Theorem~\ref{thm:embedding_in_CP^2_times_CP^1} immediately implies
 Theorem~\ref{thm:embedding_in_R^7} which establishes  embeddings of  smooth closed orientable $4$--manifolds in $\mathbb{R}^7.$

Having outlined a proof of Theorem~\ref{thm:embedding_in_CP^2_times_CP^1}, let us
now discuss how to establish embeddings of $4$--manifolds in $\C P^3$ as claimed in
Theorem~\ref{thm:embedding_4-manifolds}. Given a smooth, orientable, closed $4$--manifold,   we first  consider the manifold $M \# \C P^2 \# \overline{\C P^2}$ together with a specific SBLF.  
Next, we notice that the  the \emph{blow-up}  of $\C P^3$ along $\C P^1$ is 
a fiber bundle over $\C P^1$ with fiber $\C P^2$ with   the property that
the fiber bundle is trivial in the complement of the \emph{exceptional divisor}.

We  embed $M \# \C P^2 \# \overline{\C P^2} $ the blow-up of $\C P^3$  using this specific SBLF  by observing that $\C P^2$ admits a separable Hopf link, and hence a slight generalization of
the argument necessary to establish Theorem~\ref{thm:embedding_in_CP^2_times_CP^1} allows
us to embed $M\# \C P^2 \# \overline{\C P^2}$ Next, we note that the \emph{blow-up} of $\C P^3.$
Further, by ensure certain intersection property of the fiber of the specific SLBF  , we ensure that the embedding of $M \# \C P^2  \#\overline{\C P^2}$  constructed   is  such that
when we \emph{blow-down} the blow-up of $\C P^3,$ we produce a $\C P^3$ that 
has $M$ as its  embedded sub-manifold. The construction of the
specific SBLF, blow-up and blow-down procedures, and the proof of 
Theorem~\ref{thm:embedding_4-manifolds} are discussed  in the final section.

The mathematical preliminaries to carry out these steps are given in   Sections \ref{sec:review_blf} and \ref{sec:review_mcg}. In particular, we discuss relevant aspects of  \emph{broken Lefschetz fibrations} in Section~\ref{sec:review_blf}, and of
mapping class groups in Section~\ref{sec:review_mcg}.  

 Finally, a few remarks on conventions used in this article.  By  a  manifold we mean 
a compact orientable  manifold  with or without boundary. We denote manifolds by capital letters  $M,$ $ N,$ etc. When we need to emphasis that we are working with a manifold with boundary, we use the notation $(M, \partial M)$ consisting of  the pair  $M$ and the boundary $\partial M$ of $M.$ 
As usual, the notation  $\Sigma$ or $ \Sigma_g$ is 
used for  denoting a closed orientable surface, with $g$ indicating  the genus.

\subsection{Acknowledgment} \mbox{} 
 Dishant M. Pancholi is thankful to the Simon's Foundation and
ICTP, Trieste, Italy, for  award  of the Simons  Associateship, which allowed
him to travel to ICTP, Trieste, Italy, where a part of the work related to this
article was carried out. We are thankful to Prof. Yakov Eliashberg for contact encouragement 
and support. We are also thankful to Prof. S. Lakshmibala for suggestions regarding 
the presentation.

\section{Review of Broken Lefschetz fibrations} \label{sec:review_blf}

Broken Lefschetz fibrations  (BLF) were introduced by D. Auroux, S. K.  Donaldson, and L. Katzarkov in \cite{ADK}. These are generalized Lefschetz fibrations. 
I. Baykur~\cite{Ba} established  that every smooth orientable $4$--manifold admits a broken Lefschetz fibration. The purpose of this section is to review few definitions
and result related  to BLF. We refer to  \cite{Ba} and \cite{BO} for a detailed
discussion on BLF.  Let us begin by recalling  the definition of Lefschetz singularity.

\begin{definition}[Lefschetz singularity] Let $M$ be an oriented $4$--manifold
and $\Sigma$  an oriented surface.
Let $f: M \rightarrow \Sigma$ be a smooth map. A point $x \in M$ is said to 
have a Lefschetz singularity at $x$ for the map $f,$ provided that there is an orientation preserving
parameterization  $\phi: U \subset M \rightarrow \mathbb{C}^2,$ and an orientation preserving 
parameterization $\psi: V \subset \Sigma \rightarrow \mathbb{C}$ such that
the following properties are satisfied:

\begin{enumerate}
\item $x \in U,$ and $\phi(x) = (0, 0) \in \mathbb{C}^2.$
\item $f(x) \in V,$ and $\psi(f(x)) = 0 \in \mathbb{C}.$
\item  For the map $g:\mathbb{C}^2 \rightarrow \mathbb{C}$ given by 
$g(z_1, z_2) = z_1.z_2,$ the following diagram commutes:

\begin{center}

\begin{tikzcd}
 U \arrow[r, "\phi"] \arrow[d, "f"] &  \mathbb{C}^2  \arrow[d, "g"]  \\
V \arrow[r,  "\psi" ] & \mathbb{C}. 
\end{tikzcd}
\end{center}
\end{enumerate}\label{def:lef_sing}
\end{definition}

\begin{remark}\mbox{}
\begin{enumerate}[label=(\alph*)]

\item Observe that both $M$ as well as $\Sigma$ can have non-empty boundary, however,
it follows from Definition~\ref{def:lef_sing} that the critical point  $c$ 
belongs to the  interior  $\mathring{M}$ of $M,$ and  $f(c) \in \mathring{\Sigma}.$

\item
In case we do not put any condition regarding  preservation of  orientations  by the parameterization around $x$ and $f(x)$ in  Definition~\ref{def:lef_sing} above, the singularity is termed as \emph{achiral Lefschetz singularity}.

\item Let $f:M \rightarrow S$ be a map with an isolated Lefschetz singularity
at $c \in M.$ It is well known that the fiber over $f(c)$ is obtained by pinching
a simple closed curve $\gamma$  on  nearby smooth fiber $\Sigma_g$ to a point.
The curve $\gamma$ is known as  a \emph{vanishing cycle}.

\item If we take a small closed disk $\mathbb{D}$ around $f(c)$ not containing any other critical value, then the $f^{-1}(\partial \mathbb{D})$ is a mapping torus over
the smooth fiber $\Sigma_g$ with monodromy a  positive Dehn twist
along the vanishing cycle $\gamma.$ In case of an achiral Lefschetz singularity, 
the monodromy could be a positive or a negative Dehn twist along $\gamma.$

\end{enumerate}
\end{remark}

Next, we  recall the definition of \emph{$1$--fold singularity}.

\begin{definition}[$1$--fold Singularity]
 Let $M$ be an oriented $4$--manifold, and let $\Sigma$ be an oriented surface.
Let $f: M \rightarrow \Sigma$ be a smooth surjective map. A point $x \in M$ is said to have a 
$1$--fold singularity at $x$ provided there is an orientation preserving
parameterization  $\phi: U \subset M \rightarrow \mathbb{R}^4,$ and an orientation preserving 
parameterization $\psi: V \subset \Sigma \rightarrow \mathbb{R}^2$ such that
the following properties are satisfied:

\begin{enumerate}
\item $x \in U,$ and $\phi(x) = (0, 0,0,0) \in \mathbb{R}^4.$
\item $f(x) \in V,$ and $\psi(f(x)) = (0,0) \in \mathbb{R}^2.$
\item  For the map $h:\mathbb{R}^4 \rightarrow \mathbb{R}^2$ given by 
$h(t, x_1, x_2, x_3) =  (t, -x_1^2 + x_2^2 +x_3^2),$ the following diagram commutes:

\begin{center}
\begin{tikzcd}
 U \arrow[r, "\phi"] \arrow[d, "f"] &  \mathbb{R}^4  \arrow[d, "h"]  \\
V \arrow[r,  "\psi" ] & \mathbb{R}^2. 
\end{tikzcd}
\end{center}
\end{enumerate}
\end{definition}\label{def:1-fold_sing}

\begin{remark}\mbox{}
\begin{enumerate}[label=(\alph*)]
\item If a map $f:M \rightarrow \Sigma$
has a  $1$--fold singularity at $x$, then $x \in \mathring{M},$ and $f(x) \in \mathring{\Sigma}.$

\item
When the map $h$ in the definition of $1$-fold singularity  is allowed to
have the local model: 
$$(t,x_1, x_2, x_3) \rightarrow (t, \pm x_1^2 \pm x_2^2 \pm x_3^2),$$
 the singularity is termed as a \emph{fold singularity}. In this article,  
we will only need the local model around  $1$--fold singularity. 

\item A local singularity model for a smooth function of the  form:
$$(t, x_1, x_2,x_3) \rightarrow (t, x_1^3 + t x_1 \pm x_2^2  \pm x_3^2)$$ 
is known as a \emph{cusp singularity}.

\end{enumerate}
\end{remark}

We are now in a position to recall the notion of a broken Lefschetz fibration (BLF).

\begin{definition}[Broken Lefschetz fibration]
Let $M$ a smooth oriented $4$--manifold. By a broken Lefschetz fibration of $M$
we mean a smooth map $f: M \rightarrow \C P^1$ such that $f$ has only
$1$-fold or Lefschetz singularity. 
\end{definition}

\begin{remark}\mbox{}
\begin{enumerate}[label=(\alph*)]
\item
 Given a BLF $f:M \rightarrow \C P^1,$ the inverse image $f^{-1}(y)$ for any
 regular value $y$ is called a fiber of BLF.

\item Generically,  the image set of a $1$--fold singularity on
$\Sigma$ is  an immersed circle in $\mathring{\Sigma}.$

\end{enumerate}
\end{remark}

A BLF without $1$--fold singularity is called  a Lefschetz fibration. These singular fibrations are extremely useful in algebraic geometry~\cite{GH} and symplectic geometry~\cite{Do}.
Let us now  formally define a  Lefschetz fibration.

\begin{definition}[Lefschetz fibration]
Let $M$ be a smooth oriented $4$--manifold. A smooth map $f:M \rightarrow \Sigma$,
where $\Sigma$ is an oriented surface, having its singular points modeled 
only on Lefschetz singularities is called a  Lefschetz fibration
of $M.$  
\end{definition}\label{def:lef_fibration}

\begin{remark}\mbox{}
\begin{enumerate}[label=(\alph*)]

\item Unlike a fiber bundle or Lefschetz fibration, the fibers of a BLF are typically not
diffeomorphic. In fact,  the $1$-fold singularity in the definition of BLF corresponds to a round $1$--handle attachment~\cite{GK, BO}. Hence, if BLF
has points having fold singularity, then the genus of fibers change as we cross
the image of an immersed circle coming from a $1$--fold singularity.

\item The fibers of BLF need not be connected. However, it can be shown
that every $4$--manifold admits a BLF with  connected fibers having genus at least 
$2.$ This follows from  ~\cite[Theorem 1.1]{Ba}.

\end{enumerate}
\end{remark}

Observe that a BLF provides us a decomposition of a smooth manifold into 
simple pieces. A more simplified form of this decomposition of smooth $4$--manifold
is what we will need for this article. This simplification was introduced by
I. Baykur and  O.  Saeki in \cite{BO}. This decomposition is known as a simplified
broken Lefschetz fibration. Let us recall the definition of this:

\begin{definition}[Simplified broken Lefschetz fibration  (SBLF)]\label{def:sblf}
 Let $f:M \rightarrow \C P^1$ be a BLF. We say that this BLF is a simplified 
 broken Lefschetz fibration (SBLF) provided the function $f$ satisfies the following
 additional properties:
 
\begin{enumerate}
\item The set $Z_f$ of all $x \in M$ admitting a $1$-fold singularity model is 
connected.

\item All fibers are connected. 

\item The map $f$ is injective when restricted to $Z_f$ as well as when restricted to
the set, $C_f,$ of Lefschetz singular points. 
\end{enumerate}
\end{definition}

The definition of SBLF motivates the definition of the following:

\begin{definition}[Simplified Lefschetz fibration (SLF)]\label{def:slf}
Let $M$ be a smooth oriented $4$--manifold. Let $\Sigma$ be $\mathbb{C}P^1$ or  a
$2$--disk $\mathbb{D}^2.$ A Lefschetz fibration $f:M \rightarrow \Sigma$ is said to be simplified 
Lefschetz fibration provided all the critical values of $f$ in $\Sigma$ are isolated, and for 
any regular value $y \in \Sigma,$ the fiber $f^{-1}(y)$ is connected.
\end{definition}

\begin{remark}\mbox{}

\begin{enumerate}[label=(\alph*)]
\item A SBLF having no fold singularity is a SLF.

\item Observe that the definition of  SBLF implies that there exists a disk $\mathcal{D}$ contained in $\C P^1$ such that  every $y \in \mathcal{D}$ is a regular value, and the genus of the fiber over $y$ is minimum among all fibers of SBLF.  We call this
fiber \emph{lower genus fiber}.

\item Topologically, the unique $1$--fold singularity of SBLF corresponds to adding 
$1$--handle to a circle worth of lower genus fibers over $\partial \mathcal{D}.$ 
This corresponds to an attachment  of a round $1$--handle to $f^{-1}(\mathcal{D})$ 
such that a generic fiber of SBLF over $\C P^1 \setminus \overline{\mathcal{D}}$ 
has genus one more than the fibers over $\mathcal{D}.$
\end{enumerate}

\end{remark}


In \cite{BO}, it was shown that every orientable smooth $4$--manifold admits a
SBLF. 
%


\begin{theorem}[I. Baykur, O. Saeki: Theorem-1~\cite{BO}]\label{thm:existence_SBLF}
Given any generic map from a closed, connected, oriented, smooth 4-manifold
$X$ to $\mathbb{C}P^1,$  there are explicit algorithms to modify it to a SLBF. In particular, every closed orientable smooth $4$--manifold admits a SBLF.  Furthermore,  we can always construct
a SLBF on $M$ such that the genus of lower genus figure is bigger than $1.$
\end{theorem}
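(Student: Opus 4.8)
\section*{Proof proposal}

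The plan is to treat this as a statement purely about the singularity structure of generic smooth maps $X^4 \to \C P^1$, and to simplify that structure through a finite, explicit sequence of local homotopy moves. I would begin from the classical Thom--Boardman theory of generic maps: a generic $f_0 \co X^4 \to \C P^1$ has singular locus a disjoint union of embedded circles in $X$, along which the singularities are folds, together with finitely many isolated cusp points. The folds divide into \emph{definite} folds (local model $(t,x_1,x_2,x_3)\mapsto(t,x_1^2+x_2^2+x_3^2)$) and \emph{indefinite} folds (the $1$--fold model of Definition~\ref{def:1-fold_sing}); crossing a definite fold creates or destroys a spherical fibre component, while crossing an indefinite fold changes the fibre genus by one via a round handle attachment. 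The key observation, coming from the theory of wrinkled fibrations, is that a Lefschetz singularity can be traded against a small circle of indefinite folds carrying cusps by the \emph{wrinkling} and \emph{unwrinkling} moves. So the first task is to assemble the toolkit of local moves: birth/death of cusp pairs, merging of fold circles, the flip, (un)wrinkling, and the move eliminating a definite fold.

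First I would remove the definite folds and the cusps. A circle of definite folds locally caps off a fibre with a $2$--sphere, so it obstructs having connected higher-genus fibres; using the standard definite-fold elimination move one cancels it against a neighbouring indefinite fold, or converts it, at the cost of introducing finitely many Lefschetz singularities. This step genuinely uses the topology of $X$, since it is where round handles are traded for Lefschetz vanishing cycles. Cusps are then cleared by the unwrinkling move, which replaces a cusped arc of indefinite folds by an arc of honest indefinite folds together with additional Lefschetz points. After this stage the map has only indefinite $1$--folds and Lefschetz singularities, as required by the definition of a broken Lefschetz fibration.

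Next I would arrange the combinatorial conditions of Definition~\ref{def:sblf}. The indefinite fold locus is a disjoint union of embedded circles; using fold-merging moves and ambient isotopies of the base $\C P^1$, I would reduce this to a single embedded circle $Z_f$ on which $f$ is injective, and simultaneously push the Lefschetz critical values apart so that $f$ is injective on $C_f$. Connectivity of all fibres is secured by a separate rearrangement argument invoking the results of \cite{Ba} and \cite{BO}: whenever a fibre becomes disconnected, the offending round handle is slid or its attaching data modified so that no fibre component is capped off, so that one may assume all fibres connected. Finally the genus condition is obtained by stabilization, introducing cancelling pairs of Lefschetz singularities along boundary-parallel vanishing cycles (or performing a fibre connected sum with a trivial piece): this raises the genus of the lower genus fibre without altering the total space $X$, so the minimal genus can always be pushed above $1$.

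The step I expect to be the main obstacle is the \emph{simultaneous} control of the two global conditions, namely connectedness of the fold circle $Z_f$ and connectedness of every fibre, together with the elimination of definite folds. The difficulty is that the local moves interact: merging two fold circles or eliminating a definite fold can disconnect a fibre, while repairing a disconnected fibre can reintroduce extra fold components or cusps. Keeping all of these invariants under control at once forces a careful ordering of the moves, and it is precisely this bookkeeping that constitutes the technical heart of the Baykur--Saeki algorithm.
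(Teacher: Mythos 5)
Your proposal diverges from the paper's proof in a basic structural way: the paper does \emph{not} re-derive the existence of a broken Lefschetz fibration from singularity theory at all. It takes \cite[Theorem-1]{BO} as a black box for the existence of an SBLF, and the entire content of its proof is the genus bound, which it obtains by running the correspondence of \cite[Theorem-2]{BO} as a round trip: SBLF $\rightarrow$ $(g',k')$--trisection with $g' \geq 1$ $\rightarrow$ new SBLF whose lower genus fiber has genus $g'+2 \geq 3$. Your first two stages (Thom--Boardman generic maps, elimination of definite folds and cusps by wrinkling/unwrinkling, merging of fold circles) are a reasonable sketch of how \cite{BO} itself proves Theorem-1, so that part of your route is legitimate, if schematic. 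The problem is the one step that the paper actually has to prove, namely the genus claim, and there your argument has a genuine gap.

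Neither of your proposed genus-raising moves does what you claim. Introducing a cancelling pair of Lefschetz singularities never changes the genus of the regular fiber: a Lefschetz critical point alters the singular fiber, not the nearby smooth ones, and if the vanishing cycle is trivial in the fiber (your ``boundary-parallel'' curve; fibers here are closed, so the only invariant meaning is null-homotopic) the move changes the total space by a connected sum with $\mathbb{C}P^2$ or $\overline{\mathbb{C}P^2}$, which is exactly what you are not allowed to do. Fiber summing with a trivial piece fails for the opposite reason: a fiber sum of $f\co X \to \C P^1$ with $\Sigma_h \times \C P^1 \to \C P^1$ must be performed along fibers of the \emph{same} genus, and the result again has that same fiber genus (indeed, summing with the trivial bundle along a regular fiber reproduces the original fibration up to the gluing), so the minimal genus cannot be pushed up this way. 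Raising the fiber genus of a fibration over $\C P^1$ with closed fibers while fixing the diffeomorphism type of the total space is genuinely nontrivial -- for honest Lefschetz fibrations it is generally impossible -- and for SBLFs it requires the ``flip and slip'' type modification, which is precisely what the trisection round trip of \cite[Theorem-2]{BO} encodes and what the paper's proof uses. Without that (or an equivalent move), your final paragraph does not establish the statement ``the genus of the lower genus fiber is bigger than $1$,'' which is the only part of Theorem~\ref{thm:existence_SBLF} going beyond a citation of \cite{BO}.
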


We would like to point out that Theorem~\ref{thm:existence_SBLF} is not stated as above in
~\cite{BO}. The statement regarding the lower bound on the genus of  a lower genus fiber is not explicitly mentioned in \cite[Theorem-1]{BO}. However, it follows from the application of 
\cite[Theorem-1]{BO} followed by \cite[Theorem-2]{BO}. For the sake of completeness, we 
discuss the proof of Theorem~\ref{thm:existence_SBLF}.

\begin{proof}
To begin with, recall that by a 
\emph{trisection} of a smooth
orientable closed $4$--manifold $M,$  one means a decomposition of $M$ into three $4$--dimensional
handle-bodies (thickening of a wedge of circles), meeting pairwise in $3$--dimensional handle-bodies, and all three $4$--dimensional handle-bodies intersect in a surface.  Trisections correspond to 
a Morse $2$--function on $M.$ If $k'$ is the number of indefinite folds for the Morse $2$--function
associated to a given trisection, and
$g'$ is the genus of the surface corresponding to  the common  intersections of three $4$--dimensional handle-bodies,
then one says that the $4$--manifold has a  $(g',k')$--trisection.

In order to produce a SBLF as stated in Theorem~\ref{thm:existence_SBLF}, we observe that given 
$M$, according to \cite[Theorem-1]{BO}, there exists a SBLF $f: M \rightarrow \mathbb{C}P^1.$
Let $g$ be the genus of lower genus fiber of the SLBF. If $g>1$, then we are through. In case, $g \leq 1$, we apply 
\cite[Theorem-2]{BO} to produce a $(g', k')$--trisection from the given SLBF $f: M \rightarrow \mathbb{C}P^1.$  According to \cite[Theorem-2]{BO}, we get a $(g',k')$--trisection 
with  $g' \geq  1.$

Next, we again apply the second part of \cite[Theorem-2]{BO} to produce from this trisection 
a new SBLF. Observe that according to \cite[Theorem-2]{BO}, the new SBLF has lower genus
fiber having its genus $g' +2.$ Since $g \geq 0,$ the Theorem follows.

\end{proof}

\section{Mapping class groups of surfaces}\label{sec:review_mcg}

In this section we review some results  related to 
 mapping class groups of  closed orientable surfaces.  Good references
 for the results discussed here  are \cite{BM} and \cite{Li}.
 Let us begin  by recalling the definition of the mapping class group.

\begin{figure}[h]
\centering
\includegraphics[scale=.8]{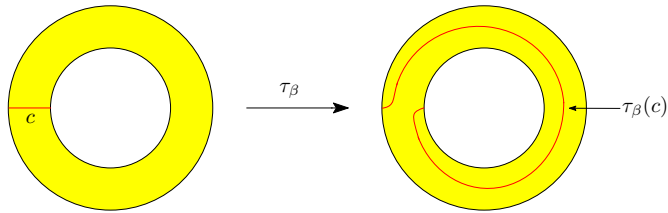}
\caption{The figure is a pictorial description of the Dehn twist 
$\tau_{\beta}$ restricted to the neighborhood $\mathcal{A}(\beta) = \mathbb{S}^1 \times [0,2 \pi].$ 
$\tau_{\beta}$  is given by $\tau_{\beta}(\theta, t) =(\theta + t, t)$ when restricted to
$\mathcal{A}(\beta).$ It sends the arc $c$ -- depicted as a red colored arc in the picture on the left of the figure -- to the arc $\tau_{\beta}(c)$ depicted in the picture on the right of the figure.} 

\label{fig:dehn_twist}
\end{figure}

\begin{definition}[Mapping class group]
Let $\Sigma$ be a closed orientable surface. By the mapping class group of $\Sigma ,$ we mean
the group of orientation preserving self diffeomorphisms of $\Sigma$ up to isotopy. 
\end{definition}

We denote the mapping class group of a surface $\Sigma$ by $\mathcal{M}CG(\Sigma).$
 Next, let us discuss the notion of a \emph{Dehn twist} along a simple closed curve embedded in a surface $\Sigma$. We refer \cite{BM} for a more detailed discussion on Dehn twists.

\begin{definition}[Dehn twist]
Let $\Sigma$ be an orientable surface. Let $\beta$ be a simple closed curve 
embedded in the interior of $\Sigma$. By a Dehn twist along $\beta$, we mean
a diffeomorphism which is identity outside an annulus
neighborhood  $\mathcal{A} (\beta)$ of $\beta$ in $\Sigma ,$ and is  given
by $\tau_{\beta}$ on 
$\mathcal{A}(\beta)$ when restricted to $\mathcal{A}(\beta),$ where $\tau_{\beta}$ is the
diffeomorphism of $\mathcal{A}(\beta)$ described in Figure~\ref{fig:dehn_twist}.
\end{definition}

\begin{figure}
\centering
\includegraphics[scale=1.2]{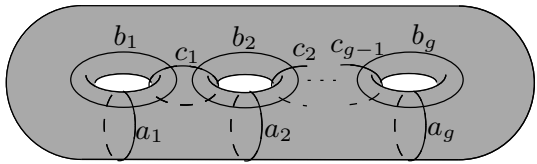}
\caption{Dehn twists along curves $a_i$'s ,$b_j$'s and $c_k$'s generate  the mapping class group of an orientable genus $g$ surface. }\label{fig:lick_gen}
\end{figure}

M. Dehn~\cite{De} and W. Lickorish~\cite{Li} independently  established that the mapping class group of an orientable genus $g$ surface $\Sigma_g$ is generated by Dehn twists along simple closed curves embedded in $\Sigma_g.$  W. Lickorish further  strengthened this result  in \cite{Li1}, to show that  the mapping class group of a closed orientable surface $\Sigma_g$ is generated by Dehn twists along the curves $a_i$'s , $b_j$'s 
and $c_k$'s as depicted in Figure~\ref{fig:lick_gen}. Following \cite{PPS}, we will 
call these curves as \emph{Lickorish generators}.

We end this section with a proposition which is a consequence of Lemma-3 established
in \cite{Li}. In order to state this proposition we need a few terminologies from
\cite{Li}. 

Let us regard an orientable  surface $\Sigma_g$ of genus $g$ as the boundary of a standard
handle-body $H_g.$ Here, a standard handlebody $H_g$ consists of  $g$  $1$--handles attached 
the unit $3$--ball in  $\mathbb{R}^3$ as depicted in  Figure~\ref{fig:standard_handle_body}.

\begin{figure}[ht]
\centering
\includegraphics[height=4.5cm,width=5cm]{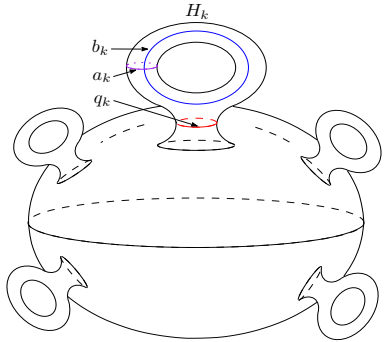}
\caption{The figure shows surface of genus $g$ embedded in $\mathbb{R}^3$ as a 
boundary of a genus $g$ handle-body considered as a unit ball with $g$ $1$--handles
attached to it.}
\label{fig:standard_handle_body}
\end{figure}

Consider a typical handle $H_k,$ as shown in Figure~\ref{fig:standard_handle_body}. 
Following \cite{Li}, we say that a simple closed curve $p$ 
\emph{does not meet} the handle $H_k$ provided  it does not intersect the curve $a_k$
depicted Figure~\ref{fig:standard_handle_body}.

\begin{proposition}[Lickorish: Lemma-3~\cite{Li}] 
\label{prop:lickorish}
Let $p$ be any simple closed curve on $\Sigma_g$.  There exists
a diffeomorphism $\phi: \Sigma_g \rightarrow \Sigma_g$ such that $\phi (p)$ 
does not meet any handle of $\Sigma_g.$  
\end{proposition}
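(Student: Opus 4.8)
The plan is to prove that any simple closed curve $p$ on $\Sigma_g$ can be carried, by some diffeomorphism, into a position where it is disjoint from every $a_k$. I would set this up as an induction on a suitable complexity measure, namely the total geometric intersection number $\sum_{k=1}^g |p \cap a_k|$, after first isotoping $p$ so that it meets each $a_k$ transversally and minimally. The base case is when this sum is zero, in which case $p$ already does not meet any handle and we take $\phi$ to be the identity. The strategy for the inductive step is to find, whenever $p$ meets some handle $H_k$, a diffeomorphism of $\Sigma_g$ (built out of Dehn twists along the Lickorish-type curves) that strictly reduces the total intersection count, and then to compose.

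The heart of the argument is the inductive step, so I would focus there. Suppose $p$ meets the handle $H_k$, i.e. $|p \cap a_k| \geq 1$. Pick an arc of $p$ that crosses the handle and consider how it sits relative to the curves $a_k$, $b_k$, and the connecting curve $c_k$ associated to that handle. The idea is that a Dehn twist (or a short product of Dehn twists) along an appropriately chosen curve among the Lickorish generators can be used to ``pull'' an intersection of $p$ with $a_k$ off the handle. Concretely, if $p$ runs over the top of the handle $H_k$, one applies a twist that slides that strand around the handle and out through the adjacent region, thereby removing at least one intersection point with $a_k$ without creating new intersections with the other $a_j$; if instead $p$ wraps through the handle, one uses the twist along $b_k$ (or along $c_k$ to transfer the strand to a neighboring handle) to decrease the count. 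Since each such move is realized by an ambient diffeomorphism of $\Sigma_g$, composing the diffeomorphisms produced at each stage yields the desired $\phi$ with $\sum_k |\phi(p) \cap a_k| = 0$.

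The main obstacle, and the step I expect to require the most care, is verifying that the chosen Dehn twist genuinely \emph{decreases} the total intersection number rather than merely trading intersections with $a_k$ for intersections with some $a_j$, $j \neq k$. This is a global bookkeeping issue: a local simplification near one handle must be shown not to increase complexity elsewhere. I would handle this by analyzing the bigon/innermost-arc structure of $p \cap a_k$ — choosing an innermost intersection or an outermost arc of $p$ on the handle so that the twist acts in a controlled local region, and by checking that the support of the twist (the annular neighborhood of the twisting curve) is disjoint from the other handles' defining curves, or meets them in a way that can only remove intersections. Since this is exactly the content of Lemma-3 of \cite{Li}, I would cite Lickorish's combinatorial analysis for this reduction, invoking his normal-form argument for curves on a handlebody boundary rather than reproving the intersection-number accounting from scratch.
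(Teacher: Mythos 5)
Your proposal matches the paper's treatment: the paper gives no independent proof of this proposition at all, but states it as a direct citation of Lickorish's Lemma~3 in \cite{Li}, which is exactly the inductive intersection-reduction argument you outline and then yourself defer to. Since the one genuinely delicate step in your sketch --- verifying that the chosen twists strictly decrease $\sum_{k}|p \cap a_k|$ without creating new intersections at other handles --- is precisely what you, like the paper, delegate to Lickorish's combinatorial analysis, your write-up is consistent with (and slightly more informative than) what the paper does.
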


\section{Lefschetz fibration embedding}\label{sec:lef_fib_embedding}

Recall from Definition~\ref{def:slf}  that a Lefschetz fibration  $(M,\pi:M \rightarrow \Sigma),$
where $\Sigma$ is either a disk or $\mathbb{C}P^1,$ is a SLF, provided that critical values are isolated 
and fibers are connected.   In this
section, we show that  there exists an
embedding of any SLF into certain manifolds of the type $(N^4 \times\mathbb{C}P^1,\pi_2),$  which is fiber preserving in the sense defined in Definition~\ref{def:lef_fib_embedding}.
This result [Theorem~\ref{thm:Lef_fib_embedding}], can be regarded as  the first step towards establishing Theorem~\ref{thm:embedding_4-manifolds}.

\subsection{Flexible embedding in standard position}
\mbox{}

Let us  begin this sub-section by reviewing the  notion of \emph{flexible embedding}.

\begin{definition}[Flexible embedding]
Let $M$ be an orientable closed smooth manifold. 
A smooth embedding $\phi:\Sigma_g \hookrightarrow M$ of a closed orientable
surface $\Sigma_g$ is said to be flexible provided for every 
$f \in \mathcal{M}CG(\Sigma_g)$ there exists a diffeomorphism $\psi$ of $M$ 
isotopic to the identity which maps $\Sigma_g$ to itself and satisfies 
$\phi^{-1} \circ \psi \circ \phi = f.$
\end{definition}

Next, we state a lemma regarding a flexible embedding of any surface of
genus $g$ into a $4$--manifold $N$, which admits a separable  Hopf link. In order to state this lemma, we need to introduce
the following definitions:

\begin{definition} [Embedding in standard position]
An embedding $\phi: \Sigma_g \hookrightarrow N$ of a surface $\Sigma_g$  is
said to be in a standard position provided the following properties
are satisfied:

\begin{enumerate}
\item Every simple closed curve $\gamma$ on $\phi(\Sigma)$ is a boundary of a $2$--disk 
$\mathbb D^2$  intersecting $\phi(\Sigma_g)$ only in $\gamma.$ 

\item There exists a tubular neighborhood $\mathcal{N} \left(\mathbb D\right)$ of the disk $\mathbb{D}^2$ having the boundary  $\gamma$ such that 
$\mathcal{N} \left(\mathbb D\right)$ is the image of a  coordinate chart  
$\phi_{\gamma}: \C^2 \rightarrow \mathcal{N} \left( \mathbb D \right)$ satisfying 
the following:
 
$\phi_{\gamma}^{-1} (\phi(\Sigma_g) \cap \mathcal{N} \left( \mathbb D \right))$ is  $g^{-1}(1),$ where $g: \C^2 \rightarrow \mathbb{C}$ is the polynomial  map 
$g(z_1, z_2) = z_1.z_2.$
\end{enumerate}
\label{def:stand_position}
\end{definition}

\begin{definition}[Separable Hopf link] \label{def:separable_Hopf_link}We say that a link $l_1\sqcup l_2$ in  a $4$--manifold $N$ is a separable Hopf link provided following properties are satisfied:
\begin{enumerate}
\item There exist an embedding of a $4$--ball $\mathbb{D}^4=\mathbb{D}^2\times \mathbb{D}^2$ in $N$ such that $\partial\mathbb{D}^2\times\lbrace 0 \rbrace \sqcup\lbrace 0 \rbrace \times \partial \mathbb{D}^2=l_1\sqcup l_2$.
\item There exists two disjoint properly embedded discs $\mathcal{D}_1$ and $\mathcal{D}_2$ in $N\setminus (\mathbb{D}^2\times\mathbb{D}^2)^\circ$ such that $\partial\mathcal{D}_1=l_1$ and $\partial\mathcal{D}_2=l_2$.
\end{enumerate}
\end{definition}

\begin{lemma}\label{lem:embedding_stand_flexible}
 Let $N$ be a $4$--manifold which admits a separable Hopf link. Then there exists an embedding $\phi$ of any closed orientable surface $\Sigma_g$
of genus $g$ in $N$ which satisfies the following:

\begin{enumerate}
\item The embedding is flexible.
\item The embedding is in  a standard position.  

\end{enumerate}
\end{lemma}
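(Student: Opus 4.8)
The plan is to build the embedding $\phi\colon \Sigma_g \hookrightarrow N$ handle by handle, using the separable Hopf link as the seed for the genus-$1$ piece and then propagating the ``standard position'' local structure across the whole surface. The core intuition is that condition (1) of Definition~\ref{def:stand_position} --- every simple closed curve on $\phi(\Sigma_g)$ bounds a disk meeting the surface only along that curve --- together with the local $g^{-1}(1)$ model of condition (2), is exactly the condition that makes Lickorish's generators act by ambient isotopies, so that flexibility will follow once standard position is established.

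\textbf{Step 1: A single handle from the Hopf link.} First I would unpack the separable Hopf link. Inside the $4$--ball $\mathbb{D}^2 \times \mathbb{D}^2 \subset N$ the two components $l_1 = \partial \mathbb{D}^2 \times \{0\}$ and $l_2 = \{0\} \times \partial \mathbb{D}^2$ together with the local function $g(z_1,z_2) = z_1 z_2$ give the standard ``plumbing'' picture: the regular fiber $g^{-1}(1)$ is an annulus whose two boundary circles are pushed-off copies of $l_1$ and $l_2$. I would take this annulus as the building block for one handle of $\Sigma_g$, cap it off using the two properly embedded disks $\mathcal{D}_1,\mathcal{D}_2 \subset N \setminus (\mathbb{D}^2\times\mathbb{D}^2)^\circ$ supplied by condition (2) of Definition~\ref{def:separable_Hopf_link}, and verify that the resulting torus sits in standard position: the annulus contributes the local $g^{-1}(1)$ chart $\phi_\gamma$ for the curve $\gamma$ running through the handle, while the capping disks $\mathcal{D}_i$ provide the bounding disks meeting $\Sigma_g$ only along the longitude/meridian pair.

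\textbf{Step 2: Stabilizing to higher genus.} To pass from genus $1$ to genus $g$ I would take $g$ parallel, disjoint copies of this local Hopf-link/annulus configuration along a tubular neighborhood of an embedded arc in $N$, connect-summing the annuli in a planar region of $N$ and verifying that disjointness of the capping disks is preserved (the $\mathcal{D}_i$ for distinct handles can be pushed apart since codimension is large enough --- a general-position argument in a $4$--manifold for $2$--disks). The genus-$g$ surface so obtained is the standard handlebody boundary $\partial H_g$ of Figure~\ref{fig:standard_handle_body}, which is what lets me invoke Proposition~\ref{prop:lickorish}. I would then check standard position for \emph{every} simple closed curve, not just the core curves, using Proposition~\ref{prop:lickorish}: an arbitrary simple closed curve $p$ can be moved by a diffeomorphism $\phi$ of $\Sigma_g$ so that $\phi(p)$ does not meet any handle, hence lies in the planar $0$--handle region where a bounding disk is evident, and the local chart is pulled back through the diffeomorphism.

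\textbf{Step 3: Flexibility.} Finally I would deduce flexibility. Since $\operatorname{MCG}(\Sigma_g)$ is generated by Dehn twists along the Lickorish generators (Figure~\ref{fig:lick_gen}), it suffices to realize each such twist by an ambient isotopy of $N$. For a Lickorish generator curve $\gamma$, standard position furnishes a bounding disk with a coordinate-chart neighborhood modeled on $g^{-1}(1) \subset \mathbb{C}^2$; the monodromy of the standard Lefschetz model $g(z_1,z_2)=z_1z_2$ around its critical value is precisely a Dehn twist about $\gamma$, so rotating the $\mathbb{C}$--parameter through $2\pi$ in this chart (extended by the identity outside) gives an ambient diffeomorphism $\psi$ of $N$, isotopic to the identity, with $\phi^{-1}\circ\psi\circ\phi = \tau_\gamma$. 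Composing such isotopies realizes an arbitrary element of the mapping class group, establishing flexibility.

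\textbf{The hard part} will be Step 2: ensuring that the local standard-position data assembled at each handle glue into a \emph{global} embedding in which condition (1) holds for \emph{all} simple closed curves simultaneously, while keeping the bounding disks for different curves coherent. The delicate point is that a generic simple closed curve is a nonseparating homology class that threads several handles, and I must guarantee its bounding disk exists and meets $\Sigma_g$ only along the curve; this is where Proposition~\ref{prop:lickorish} does the essential work of reducing the problem to curves confined to a single planar region, but verifying that the coordinate chart $\phi_\gamma$ behaves correctly under the reducing diffeomorphism --- and that the reduction does not force intersections among the disks --- is the technical crux.
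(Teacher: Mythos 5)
Your construction does not produce the surface you claim, and the two steps that are supposed to generate the genus both fail. First, in Step 1 the Hopf band $g^{-1}(1)$ capped off by the two disks $\mathcal{D}_1,\mathcal{D}_2$ is an annulus with both boundary circles capped, i.e.\ a $2$--\emph{sphere}, not a torus: the separable Hopf link configuration contributes no genus at all. Second, Step 2 asks for $g$ disjoint parallel copies of the Hopf-link/annulus/disk configuration, but Definition~\ref{def:separable_Hopf_link} supplies exactly \emph{one} such configuration, and your justification that the capping disks ``can be pushed apart since codimension is large enough'' is false in this dimension: properly embedded $2$--disks in a $4$--manifold are middle-dimensional, so general position gives transverse isolated intersection points, not disjointness. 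Concretely, for the separable Hopf link in $\mathbb{C}P^2$ used later in the paper, $\mathcal{D}_1$ is a parallel copy of the core of the $+1$--framed $2$--handle, and any two parallel copies of that core intersect in a point; no perturbation removes it. So the surface you build is not embedded, and even if it were, it would have genus $0$.

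There is also a circularity in your logic: your Step 2 verifies standard position for an arbitrary simple closed curve by moving it, via the diffeomorphism of Proposition~\ref{prop:lickorish}, to a curve missing all handles and then ``pulling back the chart'' --- but a surface diffeomorphism does not act on the ambient bounding disks; to transport a disk and its chart you need the diffeomorphism to be induced by an ambient diffeomorphism of $N$ preserving $\phi(\Sigma_g)$, which is exactly flexibility, which you only establish in Step 3, and Step 3 in turn consumes standard position. The paper resolves both problems at once: \emph{all} the genus is placed inside a single slice $\mathbb{S}^3\times\{\tfrac32\}$ of a collar in the $4$--ball, as the boundary of the standard handlebody $H_g$; this is connect-summed \emph{once} with the Hopf band bounded by $l_1\times\{\tfrac32\}\sqcup l_2\times\{\tfrac32\}$, and closed up by two cylinders together with the single given pair $\mathcal{D}_1,\mathcal{D}_2$. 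Flexibility is then proved \emph{first} (following Hirose--Yasuhara: every Lickorish generator has a Hopf annulus neighborhood in the $\mathbb{S}^3$ slice, and Dehn twists along Hopf annuli in $\mathbb{S}^3$ are realized by ambient isotopies), and only afterwards is standard position for an arbitrary curve deduced from flexibility combined with Proposition~\ref{prop:lickorish}. Your Step 3 idea of realizing the twist by the monodromy of the local model $z_1z_2$ is plausible in itself, but it cannot rescue a construction whose underlying embedded surface does not exist.
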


Before, we establish this lemma, we would like to point out that the flexible embedding
of $\Sigma_g$ in $N$ was first provided by S. Hirose and  A. Yasuhara 
in \cite{HY}. Our main observation is  that we can achieve the additional property 
of the embedding being in  a standard position, provided that we use Proposition~\ref{prop:lickorish} established by Lickorish in \cite{Li} in conjunction with the techniques from \cite{HY}.

\begin{proof}[Proof of Lemma~\ref{lem:embedding_stand_flexible}]

We want to construct  an embedding of $\Sigma_g$ which is both flexible and in a
standard position. Let $l_1\sqcup l_2$ be a separable Hopf link in $N$. Therefore there exists an embedded $4$--ball $\mathbb{D}^4=\mathbb{D}^2\times\mathbb{D}^2$ in $M$ such that $\partial\mathbb{D}^2\times\lbrace 0 \rbrace \sqcup\lbrace 0 \rbrace \times \partial \mathbb{D}^2=l_1\sqcup l_2$ and there exists two disjoint properly embedded discs $\mathcal{D}_1$ and $\mathcal{D}_2$ in $N\setminus (\mathbb{D}^2\times\mathbb{D}^2)^\circ$ such that $\partial\mathcal{D}_1=l_1$ and $\partial\mathcal{D}_2=l_2$. We regard a $4$--ball $\mathbb{D}^4$ as the $4$--ball $B^4(0,2)$ of radius $2$ in $\mathbb{C}^2$ with its center at the origin. We will also regard $\mathbb{S}^3 \times [1,2]$ as the collar 
$B^4(0,2) \setminus B^4(0,1)$ contained in $N$.

Next,  Observe that the link $l_1\times\lbrace \frac{3}{2}\rbrace \sqcup l_2\times\lbrace \frac{3}{2}\rbrace$ bounds a Hopf band say $\mathcal{H}$ in $\mathbb{S}^3\times \lbrace \frac{3}{2}\rbrace $.  We embed a genus $g$ surface $\Sigma_g$ in $\mathbb{S}^3\times \{\frac{3}{2}\}
\subset \mathbb{S}^3 \times [1,2] \subset N$ as the boundary of standard genus $g$ handle body $H_g$ and disjoint form $\mathcal{H}$ as depicted in Figure~\ref{fig:standard_handle_body}. Then we take ambient connected sum of embedded $\Sigma_g$ and $\mathcal {H}$ in $\mathbb{S}^3\times\lbrace \frac{3}{2}\rbrace$ to 
obtain a surface $\widehat{\Sigma_g}$ with two boundary components  as shown in 
Figure~\ref{fig:flexible_standard_embedding}. Thus by adding two cylinders $l_1 \sqcup l_2 \times [\frac{3}{2},2]$ and two disjoint disc $\mathcal{D}_1$,$\mathcal{D}_2$ to $\widehat{\Sigma_g}$, we obtain an embedding of genus $g$ surface. Let us denote this embedding -- after smoothing the corners -- by $\phi$.   
For a pictorial  description of the embedding $\phi$ we refer the 
reader to Figure~\ref{fig:flexible_standard_embedding}.  We claim that  
the embedding $\phi: \Sigma_g \hookrightarrow N$ is both flexible and in standard position. Let us now establish this claim.

The claim that the embedding is flexible is already established 
in \cite[Theorem: 3.1]{HY}. Let us briefly review the argument. 
First of all, notice that every Lickorish generator $\gamma$ of $\Sigma_g$ embedded in 
$N$ via $\phi$  has -- up to an isotopy -- a Hopf annulus neighborhood 
which is contained in $\mathbb{S}^3 \times \{ \frac{3}{2}\} \subset N$. Next,
recall that  the mapping class group of $\Sigma_g$ is generated by Dehn twists along Lickorish generators, and  in $\mathbb{S}^3$ there exists a
diffeomorphism isotopic to the identity which induces a Dehn twist on a given
Hopf annulus fixing its boundary point wise. In the proof of \cite[Theorem: 3.1]{HY}
it is shown that  this implies that there exists 
a diffeomorphism of $N$ isotopic to the identity which induces a Dehn
twist along a Lickorish generator of $\phi(\Sigma_g).$ The claim 
now follows by successive application of ambient isotopies of $N$ inducing a Dehn twists on Lickorish generators. See also \cite{PPS} for the necessary details.

Let us now show that the embedding is in  a standard position. First of all notice that, by very construction, any simple closed curve on $\phi(\Sigma_g)$ can be isotoped on the surface $\phi(\Sigma_g)$ so that it is contained in $\phi(\Sigma) \cap \mathbb{S}^3 \times \{\frac{3}{2}\}.$  We claim that  any Lickorish generator of 
$\phi(\Sigma_g)$  as well as any curve which does not meet handles\footnote{ Recall that, we say that a simple closed curve $p$ does not meet the handle $H_k$ provided  it does not intersect the curve $a_k$ depicted Figure~\ref{fig:standard_handle_body}.} of
$\phi(\Sigma_g)$ satisfy both the properties necessary for an
embedding to be in a standard position. This is because:

\begin{enumerate}
\item  All  curves mentioned in the claim are unknots in $\mathbb{S}^3 \times \{\frac{3}{2}\}$ hence they bound a disk in $\mathbb{S}^3 \times [1,\frac{3}{2}],$ that meets $\phi(\Sigma)$ only in the given curve.

\item Any curve $\gamma$ mentioned in the claim admits a neighborhood $\mathcal{N}(C)$ in  $\phi(\Sigma_g)$ which is a Hopf band in $\mathbb{S}^3 \times \{\frac{3}{2}\}.$
\end{enumerate}

It follows from both the properties listed above that any  curve $C,$ which
is either a Lickorish generator or is not meeting  any handle, satisfies both the properties necessary for a surface to be in the standard position.

Now, according to Proposition~\ref{prop:lickorish}, given any curve $C$,
there exists a diffeomorphism of $\phi(\Sigma_g)$ which send $C$ to a curve
which  does not meet any handle. Since 
the embedding $\phi$ of $\Sigma_g$ is flexible in $N,$ given a curve $c$ which not a Lickorish 
generator and meets some handles can be isotoped so that now it does not meet 
any handle.  Hence, the
claim that the embedding is also in a standard position follows. 

%
%

\end{proof}

In what follows we will work with embeddings of surfaces in $N$ constructed using the 
procedure described in the proof of Lemma~\ref{lem:embedding_stand_flexible}. We will
used the term \emph{standard embedding}  for any such embedding. More precisely, we have the following:

\begin{definition}[Standard embedding] Let $N$ be a manifold admitting a separable Hopf link.
An embedding $\psi$ of a closed orientable surface $\Sigma_g$ which is isotopic to 
an embedding obtained  following the 
procedure describe in the proof of Lemma~\ref{lem:embedding_stand_flexible} will be called
a standard embedding of $\Sigma_g$ 
\end{definition}

We end this sub-section by establishing an embedding result regarding embeddings of
mapping tori in $N \times \mathbb{S}^1.$ Recall that given a surface $\Sigma,$ the mapping
torus of $\Sigma$ with monodromy $g$, with $g \in \mathcal{M}CG(\Sigma),$ is the quotient space  $\Sigma \times [0,1] /~ ,$ where $(x,0) ~ (g(x),1).$ We will denote the mapping torus by $\mathcal{M}T(\Sigma, g).$ $\mathcal{M}T(\Sigma, g)$ is a fiber bundle over $\mathbb{S}^1.$ Our next
lemma establishes a fiber preserving embedding any mapping tours of $\Sigma$ in
$N \times \mathbb{S}^1.$ More precisely,

\begin{lemma}\label{lem:embedding_mapping_torus}
Let $N$ be a manifold admitting a separable Hopf lank and let $\phi: \Sigma \hookrightarrow N$ be
a standard embedding of $\Sigma.$  Let $g$ be an element of the mapping class group of $\Sigma$
Let $M$ be the mapping tours of $\Sigma$ with monodromy $g.$ There exists an embedding
of $M$ in $N \times \mathbb{S}^1$ which is fiber preserving. 
\end{lemma}

\begin{proof}
Since the embedding is standard, for  $ g \in \mathcal{M}CG(\Sigma)$ there
exists a family $f_t$ of diffeomorphisms of $\N$ with $f_0 = Id$ and $f_1$ restricted
to $\Sigma$ satisfies $\phi^{-1} \circ f_1 \circ \phi = g. $ This implies $\mathcal{M}T(\Sigma,g)$ is
contained in $\mathcal{M}T(N, \phi_1).$ Since $f_1$ is isotopic to the identity, 
$\mathcal{M}T(N, f_1) = N \times \mathbb{S}^1.$ Hence the lemma.
\end{proof}

Before we proceed. We would like to point out that Lemma~\ref{lem:embedding_mapping_torus} was
implicitly established in~\cite{PPS}.

\begin{figure}
\centering
\includegraphics[height=6cm,width=8cm ]{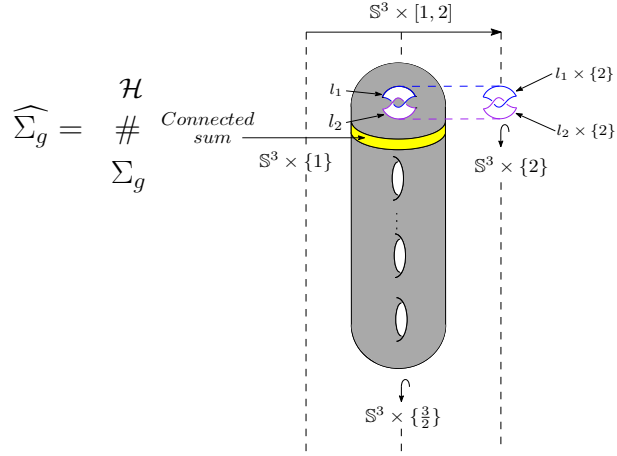}
\caption{The figure depicts the embedding of the surface $\Sigma_g$ which is 
flexible as well as in the standard position. Figure depicts the collar
$\mathbb{S}^3 \times [1,2] \subset N$ with dashed lines representing
$\mathbb{S}^3$ at levels $1$, $2$ and $\frac{3}{2}.$  
}
\label{fig:flexible_standard_embedding}
\end{figure}

\subsection{The existence of Lefschetz fibration embedding}\mbox{}

We are now in a position to state and prove our main result regarding 
\emph{Lefschetz fibration embeddings}. 

%
 As usual, we denote the map  $N \times \C P^1$ to $ \C P^1$ 
  corresponding to the projection on the second factor by $\pi_2.$ 


\begin{definition} [Lefschetz fibration embedding] \label{def:lef_fib_embedding}
Let $(M,\pi:M \rightarrow \Sigma)$  be a Lefschetz fibration, where $\Sigma$ is 
$2$--disk or 
$\mathbb{C}P^1$.  An embedding $f: M \to N\times\mathbb{C}P^1$ of a manifold $M$ into a manifold $N \times\mathbb{C}P^1$ is said to be a \emph{Lefschetz fibration embedding} provided
$\pi_{2}\circ f$=$  i \circ \pi,$ where $i$ is  an inclusion of $\mathbb{D}^2$
in $\mathbb{C} P^1$ when $\partial M \neq \emptyset,$ otherwise it is  the identity.
\end{definition}

\begin{figure}
\centering
\includegraphics[scale=0.6]{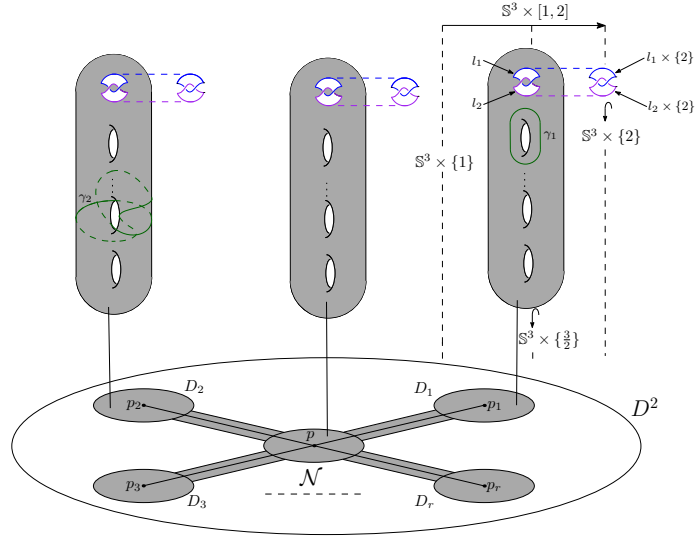}
\caption{The figure depicts part of a Lefschetz fibration $(M, \pi)$ over a disk embedded as Lefschetz fibration  in
the (Lefschetz) fibration 
$ N \times D^2 \rightarrow D^2.$ The embedding is such that
the generic fiber of $(M, \pi)$ is a flexible embedding in the standard position in
$N.$  The curves on the surface depicts the vanishing
cycles  $\gamma_i$'s.}
\label{fig:Lefschetz_fibration}
\end{figure}


\begin{theorem} 
\label{thm:Lef_fib_embedding}
 Let $M$ be an orientable smooth $4$--manifold. Let $N$ be a $4$--manifold which admits a separable Hopf link. 
If $\pi:M \rightarrow \Sigma,$
where $\Sigma$ is either $\mathbb{C}P^1$ or a $2$--disk $\mathbb{D}^2$,
is a simplified Lefschetz fibration (SLF) of $M$ having genus  $g$ fibers with $g \geq 1$, then there exists a Lefschetz fibration 
embedding of $(M, \pi)$ in $(N\times\mathbb{C}P^1, \pi_2).$ 
\end{theorem}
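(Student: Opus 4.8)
The plan is to assemble the embedding $f$ over a decomposition of the base $\Sigma$, using the honest complex Lefschetz model near each critical point and the flexible standard embedding of the fiber over the regular locus, and then to glue the pieces together using flexibility. First I would choose pairwise disjoint closed disks $D_1,\dots,D_k\subset\mathring{\Sigma}$ with the critical values $p_i\in\mathring{D_i}$ of $\pi$, and set $B=\Sigma\setminus\bigsqcup_i\mathring{D_i}$. Then $\pi\colon\pi^{-1}(B)\to B$ is a genuine $\Sigma_g$--bundle (all fibers are smooth and connected, and $g\ge1$ so they have positive genus), while each $\pi^{-1}(D_i)\to D_i$ is a Lefschetz fibration with a single critical point and vanishing cycle $\gamma_i$, and the monodromy of $\pi$ around $\partial D_i$ is the positive Dehn twist $\tau_{\gamma_i}$.

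The central step is the embedding near each critical point. By Definition~\ref{def:lef_sing} I may take coordinates identifying $\pi$ with $g(z_1,z_2)=z_1z_2$ there, and identify the $\pi_2$--coordinate on $D_i$ with $t\in\C$. Fix the standard embedding $\phi$ of $\Sigma_g$ in $N$ from Lemma~\ref{lem:embedding_stand_flexible}; its standard--position property (Definition~\ref{def:stand_position}) applied to $\gamma_i$ produces a chart $\phi_{\gamma_i}\colon\C^2\to\mathcal{N}(\mathbb{D})\subset N$ with $\phi_{\gamma_i}^{-1}\bigl(\phi(\Sigma_g)\cap\mathcal{N}(\mathbb{D})\bigr)=g^{-1}(1)$. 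I then set, near the critical point,
\[
f(z_1,z_2)=\bigl(\phi_{\gamma_i}(z_1,z_2),\,z_1z_2\bigr)\in N\times\C P^1.
\]
This is an embedding because $\phi_{\gamma_i}$ is, it satisfies $\pi_2\circ f=z_1z_2=\pi$ and is therefore fiber preserving, it carries the critical point to $(\phi_{\gamma_i}(0,0),p_i)$, and its fiber over $t=1$ is exactly $\phi(\Sigma_g)\cap\mathcal{N}(\mathbb{D})$. Because it is the honest complex model, the vanishing cycle is capped by the disk $\mathbb{D}$ furnished by part~(1) of Definition~\ref{def:stand_position} with its natural Lefschetz framing, and the complex family $\{z_1z_2=t\}$ realizes $\tau_{\gamma_i}$ as its monodromy around $\partial D_i$. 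Away from a neighborhood of $\gamma_i$ the fibration over the contractible disk $D_i$ is trivial, so there I embed by $\phi\times\mathrm{id}$ and interpolate across the annular overlap, which is possible precisely because $\phi(\Sigma_g)$ agrees with $g^{-1}(1)$ near $\gamma_i$.

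Over the regular locus I embed $\pi^{-1}(B)$ fiberwise. This bundle is classified by its monodromy $\pi_1(B)\to\mathcal{M}CG(\Sigma_g)$, whose value on the loop around each $\partial D_i$ is $\tau_{\gamma_i}$. Since $B$ retracts to a graph, I trivialize the bundle over a maximal tree and embed it there by $\phi\times\mathrm{id}$; over each remaining edge I realize the corresponding mapping class by an ambient isotopy $\{f_s\}$ of $N$ with $f_0=\mathrm{id}$ and $\phi^{-1}\circ f_1\circ\phi$ equal to that class, exactly as in the proof of Lemma~\ref{lem:embedding_mapping_torus}. Flexibility of $\phi$ guarantees such isotopies exist and, when $\Sigma=\C P^1$, that the global relation $\prod_i\tau_{\gamma_i}=\mathrm{id}$ closes up. The restriction of the resulting embedding over each $\partial D_i$ is the standard fiberwise embedding of the mapping torus $\mathcal{M}T(\Sigma_g,\tau_{\gamma_i})$.

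Finally I glue. Over each $\partial D_i$ both the local complex model and the regular--part embedding restrict to a standard fiberwise embedding of $\mathcal{M}T(\Sigma_g,\tau_{\gamma_i})$, so after an isotopy supported near $\partial D_i$ — again using flexibility to identify the two standard embeddings — the pieces agree and assemble to a fiber preserving embedding over all of $\Sigma$. When $\Sigma=\C P^1$ this is the desired $f\colon M\hookrightarrow N\times\C P^1$; when $\Sigma=\mathbb{D}^2$ I compose with the inclusion $N\times\mathbb{D}^2\hookrightarrow N\times\C P^1$ induced by $i$, so that $\pi_2\circ f=i\circ\pi$ as required by Definition~\ref{def:lef_fib_embedding}. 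The main obstacle is exactly this gluing: one must verify that the Dehn--twist monodromy produced intrinsically by the complex family $\{z_1z_2=t\}$ around $\partial D_i$ coincides with the monodromy realized by the flexible ambient isotopy over $B$, and that the two resulting standard embeddings of the fiber can be isotoped to one another near $\gamma_i$. This compatibility is precisely what the standard--position normalization (forcing $\phi(\Sigma_g)=g^{-1}(1)$ near each $\gamma_i$) together with the flexibility of $\phi$ are engineered to deliver.
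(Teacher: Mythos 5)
Your proposal is correct and follows essentially the same route as the paper's own proof: the same decomposition of the base into small disks around the critical values plus the regular part, the same use of the flexible standard embedding (Lemma~\ref{lem:embedding_stand_flexible}) and mapping-torus embeddings (Lemma~\ref{lem:embedding_mapping_torus}) over the regular locus, and the same local model $(z_1,z_2)\mapsto\bigl(\phi_{\gamma_i}(z_1,z_2),\,z_1z_2\bigr)$ supplied by the standard-position chart near each critical point, which is exactly the paper's map $f_{c_i}\circ i\circ\phi_i$. The gluing compatibility you flag as the main obstacle is the same point the paper disposes of by combining standard position with flexibility, so your argument matches the published one in both structure and level of detail.
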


\begin{proof} 
Let us first  provide a proof of the theorem, when  $\Sigma= \C P^1.$ In this
case, $M$ is a closed orientable manifold admitting a SLF $\pi: M \rightarrow \C P^1.$

Let $c_1, c_2,  \cdots c_k$ be $k$ critical points of the Lefschetz fibration
$(M, \pi).$ Since the Lefschetz fibration is simple, 
$\pi(c_1) =p_1, \pi(c_2) =p_2, \cdots,$ and $ \pi(c_k)= p_k$ are distinct points on 
$\C P^1.$ Also, recall that   that the genus $g$ of the generic fiber is bigger than or equal to $2.$ Let $\gamma_i$ be the vanishing cycle corresponding to the critical point $c_i$ on a generic fiber $\Sigma_g$ of the SLF.  

Let $U_i$ be the open ball in $M$ around $c_i$ such that on $U_i$ we have co-ordinates $(z_1, z_2)$ such that  $\pi$ in this co-ordinates is given by $(z_1, z_2) \rightarrow z_1.z_2.$ Let 
$\widetilde{D}_i = \pi(U_i) \subset \C P^1.$ Let $D_i$ be an open disk containing
$p_i$  with  $\overline{D_i} \subset \widetilde{D}_i.$

First of all consider an embedding $\phi$ of the fiber  $\Sigma_g$ in $N$ 
which is a standard embedding. Recall that the existence of such
an embedding is the content of Lemma~\ref{lem:embedding_stand_flexible}.

Using the flexibility of the embedding $\phi,$ we first produce an embedding 
$\widehat{f}$ of
$M \setminus \sqcup_{i=1}^{k} \pi^{-1}(D_i)$ in the manifold 
$ N\ 
\times \left( \C P^1 \setminus \sqcup_{i=1}^k D_i\right)$ such that the following diagram commutes:

\begin{equation}
\begin{tikzcd}
 M \setminus \sqcup_{i=1}^{k} \pi^{-1}(D_i) \arrow[r, "\widehat{f}"] \arrow[d,"\pi" ] &  N\ 
\times \left( \C P^1 \setminus \sqcup_{i=1}^k D_i\right) \arrow[d,"\pi_2"]  \\
\C P^1 \setminus \sqcup_{i=1}^{k}D_i \arrow[r, "Id"] & \C P^1 \setminus \sqcup_{i=1}^{k}D_i .
\end{tikzcd}
\end{equation}

In order to do this, we observe that  the embedding of $\Sigma_g$ in $N$ is standard. Hence
Lemma~\ref{lem:embedding_mapping_torus} implies given an element $\psi \in \mathcal{M}CG(\Sigma_g)$ there exists an embedding $\Psi$ of the mapping torus, $\mathcal{M}T (\Sigma_g, \psi)$,   in the trivial fiber bundle  $\pi_2:  M  \times \mathbb{S}^1 \rightarrow \mathbb{S}^1$  such that
the following diagram commutes:

\begin{equation}
\begin{tikzcd}
 \mathcal{M}T(\Sigma_g, \psi) \arrow[r, "\Psi" ] \arrow[d, "\pi"] & N\times
 \mathbb{S}^1 \arrow[d]  \\
\mathbb{S}^1 \arrow[r, "Id"] & \mathbb{S}^1 .
\end{tikzcd}
\end{equation}

Next, considering   $\partial D_i \subset \mathbb{C} P^1 = \mathbb{S}^1$ then
it follows from the existence of an embedding $\phi$ satisfying the diagram $(2)$ that there is an embedding of the mapping torus 
$\mathcal{M}T(\Sigma_g, \tau_{\gamma_i})$ in $N  \times \partial D_i,$ where $\tau_{\gamma_i}$ denotes the Dehn twist along the curve
$\gamma_i.$  Now take arcs
connecting a point on $\partial D_i$ to a fixed regular point $p$ for
the map $\pi$  in $\C P^1$ as  depicted in Figure~\ref{fig:Lefschetz_fibration}.

Since the Lefschetz fibration $(M, \pi)$ restricted to  a regular 
neighborhood $\mathcal{N}$ of $D_i$'s together with arcs connecting them satisfies the
following:

\begin{enumerate}
\item   $\pi^{-1} (\partial D_i)$ is the mapping torus 
$\mathcal{M}T(\Sigma_g, \tau_{\gamma_i}),$

\item  $M$ restricted to $\partial \mathcal{N}$ is the manifold $\Sigma_g \times 
\mathbb{S}^1$ as $\displaystyle\prod_{i=1}^{k} \tau_{\gamma_i} = Id$ in $\mathcal{M}CG(\Sigma_g),$

\item the complement of $\mathcal{N}$ is a disk in $\C P^1,$

\item and the genus $g \geq 2,$

\end{enumerate}
\noindent we get the required embedding $\widehat{f}$ such that the diagram
$(1)$ commutes.


Our next step is to  show how to extend this embedding to produce a Lefschetz fibration embedding of $f$ of $M$ in $N\times\mathbb{C}P^1.$  For this  the property that the embedding $\phi$ of $\Sigma_g$ is also in the standard position  is 
required.

Since the embedding $\phi$ is in a standard position -- by
the definition of an embedding in a standard position given in \ref{def:stand_position} -- there exists an embedding of $\phi_{\gamma_i}:\C^2 \hookrightarrow  N$ which satisfies the second property 
listed in  Definition~\ref{def:stand_position}.

Next, for each critical point $c_i$, we claim that, we have following commutative diagram: 

\begin{equation}\label{diag:commutative_local_maps}
 \begin{tikzcd}
 U_i\subset M\arrow{r}{\phi_i}\arrow{d}{\pi}
 &\C^2\arrow{r}{i}\arrow{d}{g}
 &\C^2\times\C \arrow{r}{f_{c_i}}\arrow{d}{P}
 &N\times \C P^1\arrow{d}{\pi_2}\\
 \widetilde{D}_i\arrow{r}{\phi}&\C\arrow{r}{Id}&\C \arrow{r}{\phi^{-1}}&\widetilde{D}_i ,\end{tikzcd}
 \end{equation}
\noindent where the definitions of the maps appearing in the diagram are as follows:
\begin{enumerate}
\item $\phi_i:U_i\subset M \to \C^2$ and $\phi: \widetilde{D}_i\subset \C P^1 \to \C$ are orientation preserving parameterizations around critical point $c_i$ of $\pi$ and $\pi(c_i)$ respectively such that left square commutes in the diagram above,
\item $i:\C ^2 \to \C^2 \times \C$ and $g:\C^2 \to \C$ are defined as
 $i(z_1,z_2)=(z_1,z_2,0)$ and $g(z_1,z_2)=z_1.z_2$,
\item $f_{c_i}:\C^2\times\C \to N \times \C P^1$ and $P:\C^2\times\C \to \C$ are defined as
 
\noindent $f_{c_i} (z_1,z_2,z_3)=(\phi_{\gamma_i}(z_1,z_2),\phi^{-1}(z_1.z_2+z_3))$ and $P(z_1,z_2,z_3)=z_1.z_2+z_3$.
\end{enumerate}
\noindent The commutativity of the middle square is follows directly from definitions of  maps $g,i$ and $P$. Also the commutativity of the last square is clear by the definition of the map  $f_{c_i}$. Next, we observe that the commutative diagram~\ref{diag:commutative_local_maps} allows us to extend the embedding $\widehat{f}$ to the embedding $\widehat{f}_{c_i}$ of  $M \setminus \sqcup_{i=1}^{k} \pi^{-1}(D_i) \cup U_i$. This is possible because $\widehat{f}$ and $f_{c_i}\circ i\circ\phi_i$ agree on the overlapping region of the domain. Hence, $\widehat{f}$ and $f_{c_i}\circ i\circ\phi_i$ together defines  a map $\widehat{f}_{c_i}$.

Let us now notice that this allows us to extend the embedding $\widehat{f}_{c_i}$ 
 to an embedding $\widehat{f}_{c_i}$ of 
 $ W_{c_i} =M \setminus \left( \bigcup\limits_{l=1}^{i-1}\pi^{-1}(D_l) \bigcup\limits_{l=i+1}^{k}  \pi^{-1} (D_{l})\right)$ 
in $N \times \C P^1$ such that the following diagram commutes:

\begin{equation}
\begin{tikzcd}
  W_{c_i} \arrow[r,"\widehat{f}_{c_i}"] \arrow[d, "\pi"] & \widehat{f}_{c_i}(W_{c_i}) \subset N \times \C P^1 \arrow[d, "\pi_2"] \\
 \pi(W_{c_i}) \subset \C P^1 \arrow[r, "Id"] & \pi_2(\widehat{f}_{c_i}) = \pi(W_{c_i}).   
\end{tikzcd}
\end{equation}

Observe that by construction the embeddings $\widehat{f}_{c_i}$ and 
$\widehat{f}_{c_j}$ agree on on $W_{c_i} \cap W_{c_j}.$  Since 
$M = \cup_{i=1}^k  W_{c_i}$ we get an embedding $f$ of $M$ with required 
properties. This completes our argument in case when $\Sigma= \C P^1$. 

The case, when $\Sigma= \mathbb D^2$ the argument is essentially same.  The only
difference is that the product
 $\displaystyle\prod_{i=1}^{k} \tau_{\gamma_i}$ need not be the identity.  
However, notice that since  $\Sigma = \mathbb{D}^2$ the same argument produces
an embedding such that the monodromy along $\partial \mathbb{D}^2$ is precisely 
 $\displaystyle\prod_{i=1}^{k} \tau_{\gamma_i}.$

\end{proof}

\section{Embedding of orientable $4$-manifolds via SBLF}
\label{sec:embed_in_product}

The purpose of this section is to establish a class of $6$--manifolds in which all closed smooth orientable
$4$--manifolds embed. As mentioned earlier,
we will use the SBLF decomposition of a closed orientable smooth $4$--manifold 
for constructing embeddings. We first need the following:

\begin{definition}[$1$--fold simple singular fibration] Let $(M, \partial M)$ be an orientable smooth $4$--manifold with boundary and 
let $f: M \rightarrow  [-1,1] \times \mathbb{S}^1$ be a smooth surjective  map which satisfies the following:

\begin{enumerate}
\item  There exists a unique embedded circle  $Z_f$ in $M$ of $1$-fold singularity for $f$ such that
$f (Z_f)$ is an embedded  circle in $[-1,1] \times \mathbb{S}^1$ which
is  ambiently isotopic to the circle  $ \{ 0 \} \times \mathbb{S}^1.$

\item every $x \in M \setminus Z_f$ is a regular value for the map $f$

\item $\partial M = f^{-1}\left(\{-1\} \times \mathbb{S}^1 \sqcup f^{-1}\{1\} \times \mathbb{S}^1\right).$

\end{enumerate}

Then, we say that $f: M\rightarrow [-1,1] \times \mathbb{S}^1$ is a 
\emph{$1$--fold simple singular fibration}.
\end{definition}

\begin{remark}\mbox{}

\begin{enumerate}[label=(\alph*)]
\item Since $f:M \rightarrow [-1,1] \times \mathbb{S}^1$ has a unique embedded
singular locus $Z_f$ which projects to a circle $C$ isotopic to 
$\{0\} \times \mathbb{S}^1$ the inverse image of any regular value is a closed
surface $\Sigma$ whose genus is either $g$  or $g+1$ for some $g \in \mathbb{N} \cup \{0\}.$ We call a fiber with genus $k$ as a  lower genus fiber.

\item Observe that as we cross the $f(Z_f)$  a round $1$--handle is added to
a manifold  diffeomorphic to $\Sigma_g \times A,$ where $A$ is an annulus.
   
\item We will always use the convention that fiber over $\{-1\} \times \mathbb{S}^1$
have lower genus.

\end{enumerate}
\end{remark}

\begin{lemma}Let $(M,\partial M)$ be an orientable smooth $4$--manifold with boundary and $f: M\rightarrow [-1,1]\times\mathbb{S}^1$ be a $1$--fold simple singular fibration. Let $N$ be a $4$--manifold which admits a separable Hopf link. Then there exists an embedding $\psi:M\rightarrow N\times [-1,1]\times \mathbb{S}^1$ such that following properties are satisfied:

\begin{enumerate}
 \item The following diagram commutes:

\begin{equation}
\begin{tikzcd}
 M \arrow[r, "\psi"] \arrow[d, "f"] &  N \times [-1,1] \times \mathbb{S}^1  \arrow[d, "\pi_2"]  \\
\left[-1,1\right] \times \mathbb{S}^1 \arrow[r,  "Id" ] & \left[-1,1\right] \times \mathbb{S}^1. 
\end{tikzcd}
\end{equation}
\item  Given a standard embedding    $\phi$ of a surface of genus $g+1$ in $N$, we can ensure that $\psi$ restricted to any higher genus fiber send the fiber to a surface in $N$ which is isotopic to the given
embedding $\phi.$

\end{enumerate}
  
\label{lem:simple_singular_fibration_embedding}
\end{lemma}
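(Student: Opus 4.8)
The plan is to split $M$ along the $[-1,1]$ factor into three fiber-preserving pieces and embed each separately, isolating the genuine difficulty in the round-handle region. After an ambient isotopy of the base arranging $f(Z_f)=\{0\}\times\mathbb{S}^1$ (permitted by the definition of a $1$--fold simple singular fibration), I would write $M=M_-\cup M_0\cup M_+$, where $M_\pm=f^{-1}([\pm\epsilon,\pm 1]\times\mathbb{S}^1)$ and the fold region $M_0=f^{-1}([-\epsilon,\epsilon]\times\mathbb{S}^1)$ contains the singular circle $Z_f$. By the convention that fibers over $\{-1\}\times\mathbb{S}^1$ have lower genus, $M_-$ is a $\Sigma_g$--bundle and $M_+$ a $\Sigma_{g+1}$--bundle over an annulus.

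For the two product-like pieces I would invoke Lemma~\ref{lem:embedding_mapping_torus}. The bundle $M_+$ over the annulus $[\epsilon,1]\times\mathbb{S}^1$ carries monodromy only in the $\mathbb{S}^1$--direction, the $[\epsilon,1]$--direction being contractible and hence trivial; so, taking the \emph{given} standard embedding $\phi$ of $\Sigma_{g+1}$ and applying the mapping-torus embedding of Lemma~\ref{lem:embedding_mapping_torus} fiberwise, I obtain a fiber-preserving embedding of $M_+$ into $N\times[\epsilon,1]\times\mathbb{S}^1$ sending every fiber to a copy isotopic to $\phi(\Sigma_{g+1})$. This is precisely property $(2)$. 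The same argument applied to a standard embedding of $\Sigma_g$ handles $M_-$, and in both cases the diagram of property $(1)$ commutes by construction.

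The crux is the fold region $M_0$, where I would build an explicit local embedding model for the $1$--fold singularity, in the spirit of the local Lefschetz model of Theorem~\ref{thm:Lef_fib_embedding}. Across $f(Z_f)$ the round $1$--handle realizes, in each fiber, a compression (saddle move) along a single vanishing curve $\gamma$ taking $\Sigma_{g+1}$ to $\Sigma_g$. Using flexibility I would isotope so that $\gamma$ is a standard curve: by Definition~\ref{def:stand_position} it bounds a disk meeting the surface only in $\gamma$ and carries a coordinate chart $\phi_\gamma\colon\C^2\to N$ in which the surface near $\gamma$ is $g^{-1}(1)=\{z_1z_2=1\}$. I would then realize the saddle move as a real $1$--parameter family of surfaces, supported in this chart and modeled on the fold normal form $-x_1^2+x_2^2+x_3^2$, interpolating between the annular neighborhood of $\gamma$ on the higher-genus side and the two compressing disks on the lower-genus side; crossing this family with the $\mathbb{S}^1$--direction of $Z_f$ gives the local embedding of the round handle into $N\times[-\epsilon,\epsilon]\times\mathbb{S}^1$. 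Finally I would check that this model agrees near $\{\pm\epsilon\}\times\mathbb{S}^1$ with the product embeddings of $M_\pm$, so the three pieces glue to the desired $\psi$.

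The main obstacle is exactly this fold model. One must produce the $s$--family inside the standard chart so that (i) it genuinely realizes the round $1$--handle, i.e.\ the normal form $-x_1^2+x_2^2+x_3^2$, with no spurious singularities; (ii) it extends coherently as $t$ runs around $Z_f\cong\mathbb{S}^1$, which forces $\gamma$ and its standard chart to vary consistently and is where the connectedness of $Z_f$ together with the flexibility of the embedding must be combined; and (iii) it matches the standard genus $g$ and genus $g+1$ embeddings on the two boundary annuli, so that the compression in the standard chart converts standard $\Sigma_{g+1}$ into standard $\Sigma_g$. Once $\gamma$ is made standard, (i) and (iii) reduce to local normal-form verifications, while (ii) is the essential point requiring the uniqueness of the singular circle and flexibility in tandem.
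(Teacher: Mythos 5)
Your proposal follows essentially the same route as the paper: embed the two mapping-torus pieces via Lemma~\ref{lem:embedding_mapping_torus} applied to standard embeddings, and fill in the fold region with an explicit graph-type model on the normal form $-x^2+y^2+z^2$ crossed with the $\mathbb{S}^1$ of $Z_f$, then glue; the paper's version of your point (iii) is the observation that compressing the standard $\Sigma_{g+1}$ along the vanishing curve inside a ball disjoint from the separable Hopf link data yields a surface that is again a \emph{standard} embedding of $\Sigma_g$ (so the lower-genus piece cannot use an arbitrary standard embedding, only this induced one), and your point (ii) is resolved exactly as you suspect, by making the fold model and the monodromy isotopy $\theta$-independent near the compression ball, using that the monodromy preserves the vanishing curve.
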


\begin{proof}

Let us denote by $M_0 = f^{-1}(\{-1\} \times \mathbb{S}^1),$ and 
$M_1 = f^{-1}(\{1\} \times \mathbb{S}^1.$
We know that $\partial M = M_0 \sqcup M_1.$
Observe that $M_1$ is a mapping torus  over $\mathbb{S}^1$ with fiber $\Sigma_{g+1}.$
Recall that any mapping torus over $\mathbb{S}^1$ is determined by  its
monodromy -- an element of $\mathcal{M}CG(\Sigma_g).$ 
Let $\phi$ be the monodromy for the fiber bundle $M_1$ over 
$\mathbb{S}^1.$ Further, since $f:(M, \partial M) \rightarrow [-1,1] \times \mathbb{S}^1$ is 
a $1$--fold simple singular fibration, we have the following: there exists a curve $c$ in $\Sigma_{g+1}$ which is mapped to itself by $\phi$~\cite[p. 10895]{BO}, and  the boundary component $M_0$ is obtained from $M_1$ by the following procedure:

First cut $\Sigma_{g+1}$ along $c,$ and  attach
to the resulting surface  a pair of disks -- say $D_1$ and $D_2$. Now form the mapping torus of the resulting surface $\Sigma_g$ with  monodromy the map $\phi$ restricted to  $\Sigma_g.$ 

This also implies that we can obtain $(M, \partial M)$  by suitably adding a round $1$--handle 
along a pair of points times $\mathbb{S}^1$ such that each disk $D_i \times \mathbb{S}^1$
 contains a circle of the round attaching sphere.

 Now, let $i: \Sigma_{g+1} \subset  N$ be a  standard embedding of $\Sigma_{g+1}$
in $N$  Since the embedding is standard, we know every simple close  curve $\gamma$ on 
$\Sigma_{g+1}$ bounds a disk in $D$ in $N$ such that the intersection of this disk with $N$ is 
$\gamma.$  Furthermore, recall that any simple closed curve in a standard embedding of 
$\Sigma_{g+1}$ can be assumed to be disjoint from the separable Hopf link, and the pair of disjoint disks that the link bounds. This implies  that there exist a $4$--ball $B$ containing the disk $D$ such that $\Sigma_{g+1} \cap B^4$ is an annulus  $A$ and $\partial A$ is a pair of unlinked unknots in $\partial B^4.$

Since the embedding is standard, from Lemma~\ref{lem:embedding_mapping_torus} 
it follows  that there exist
a fiber preserving  embedding of $M_1$ in $N \times \{1\} \times \mathbb{S}^1.$ Since $\phi$ send $c$ to itself $\phi(c) = \pm c.$  Since the curve $c$ bounds disk in $\Sigma_g,$ without
loss of generality we can assume that $\phi(c) = c.$ 

We know that the embedding of a surface  $\Sigma_g$ obtained by
cutting $\Sigma_{g+1}$ along the curve $c$ agrees with $\Sigma_{g+1}$ everywhere
except in the ball $B^4.$ Since the ball $B^4$ is disjoint form the separable Hopf link and 
the pair of disjoint disks that the link bounds, we get that the embedding of  $\Sigma_g$ given by
by cutting $\Sigma_g$ is also standard. Hence, applying Lemma~\ref{lem:embedding_mapping_torus},
we get an embedding of $M_0$
in $N \times \{-1\} \times \mathbb{S}^1$ which is also fiber preserving. 

Observe that by very construction the embedding of $\partial M = M_0 \sqcup M_1$ can be extended 
to an embedding  $\widehat{\psi}$ of $(M, \partial M) \setminus  \mathcal{N}$ in 
$N \setminus B^4 \times [-1,1] \times \mathbb{S}^1,$ where $N$ is a neighborhood of 
$1$--fold singularity. Furthermore, we can assume that the following diagram commutes:

\begin{equation}
\begin{tikzcd}
 M \setminus \mathcal{N} \arrow[r, "\widehat{\psi}"] \arrow[d, "f"] &  N \setminus B^4 \times [-1,1] \times \mathbb{S}^1  \arrow[d, "\pi_2"]  \\
\left[-1,1\right] \times \mathbb{S}^1 \arrow[r,  "Id" ] & \left[-1,1\right] \times \mathbb{S}^1. 
\end{tikzcd}
\end{equation}

 Hence, in order to establish the lemma, we need to extend the embedding constructed so far
 in the region $\mathcal{N}.$  We can assume that $\mathcal{N}$ is a tubular neighborhood of
 the $1$-fold critical locus, and hence can be identified with $B^3 \times \mathbb{S}^1.$

Let $(x,y,z,\theta)$ be co-ordinates on  a tubular neighborhood 
$\mathcal{N} = B^3\times \mathbb{S}^1$ of the singular locus  $Z_f$ of $f$ such that the map $f$ sends $(x,y,z,\theta)$ to $(-x^2+y^2+z^2,\theta).$ 
Let us  embed $B^3\times \mathbb{S}^1$ in $B^4(0,1)\times[-1,1]\times\mathbb{S}^1.$ 
The  embedding  $\widehat{\psi_1}:B^3\times \mathbb{S}^1\rightarrow B^4(0,1)\times[-1,1]\times\mathbb{S}^1$  is defined as $\widehat{\psi_1}(x,y,z,\theta)=(x,y,z,0,-x^2+y^2+z^2,\theta)$. We can see $\widehat{\psi_1}$ is defined such that following diagram commutes: 

\begin{equation}
\begin{tikzcd}
 B^3\times \mathbb{S}^1 \arrow[r, "\widehat{\psi_1}"] \subset M \arrow[d, "f"] &  B^4(0,1) \times [-1,1] \times \mathbb{S}^1 \subset N\times[-1,1]\times\mathbb{S}^1 \arrow[d, "\pi_2"]  \\
\left[-1,1\right] \times \mathbb{S}^1 \arrow[r,  "Id" ] & \left[-1,1\right] \times \mathbb{S}^1. 
\end{tikzcd}
\end{equation}

\begin{figure}
\centering
\includegraphics[scale=.7]{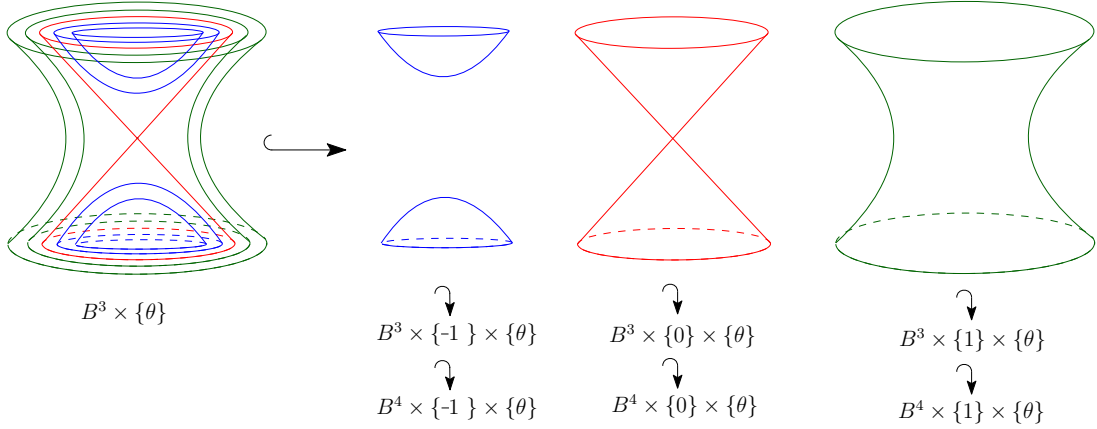}
\caption{Simple Lefschetz fibration embedding}
\label{fig:simple_lefschetz_fibration}
\end{figure}

Observe that the embedding $\widehat{\psi}_1$ has the property that for 
each $\{t\} \times \mathbb{S}^1,$ the intersection of $f^{-1} (\{t\} \times \mathbb{S}^1$ with
$\partial B^4 \times \{t\} \times \mathbb{S}^1$ is a pair of unliked unknot. This implies that
by perturbing the embedding $\widehat{\psi}$ slightly, we can assume that both embeddings
agrees near the boundary to produce an embedding $\psi$ of $M$ in $N \times [-1,1] \times \mathbb{S}^1.$

Clearly, $\psi$ is the required embedding. This shows that we can produce an
embedding of $(M,\partial M)$ in $N$ satisfying the property $(2).$ Since there always
exists a standard  embedding of $\Sigma_{g+1},$ the lemma follows.
\end{proof}

\begin{theorem}\label{thm:embedding_in_CP^2_times_CP^1} Let $M$ be an orientable closed smooth $4$--manifold.  Let $N$ be a $4$--manifold which admits a separable Hopf link. Then there exists an embedding $\psi:M\rightarrow N\times\C P^1.$ 
\end{theorem}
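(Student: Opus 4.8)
The plan is to combine the two fibration-embedding results already established—Theorem~\ref{thm:Lef_fib_embedding} for the Lefschetz part and Lemma~\ref{lem:simple_singular_fibration_embedding} for the fold part—using the SBLF decomposition of $M$ guaranteed by Theorem~\ref{thm:existence_SBLF}. First I would fix a SBLF $f\colon M\rightarrow \C P^1$ whose lower genus fiber has genus $g\geq 2$; by the remarks following Definition~\ref{def:slf} this produces a distinguished disk $\mathcal{D}\subset \C P^1$ over which the fibers are the lower genus fiber $\Sigma_g$, together with a unique $1$--fold singular locus $Z_f$ whose image is an immersed circle, and finitely many Lefschetz singularities. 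The strategy is to decompose $\C P^1$ into three regions: a disk region carrying the Lefschetz fibration of genus $g+1$, an annular region $[-1,1]\times\mathbb{S}^1$ carrying the $1$--fold simple singular fibration interpolating between genus $g+1$ and genus $g$, and finally the disk $\mathcal{D}$ over which the fibration is the trivial bundle $\Sigma_g\times\mathbb{D}^2$.

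Concretely, I would first apply Theorem~\ref{thm:Lef_fib_embedding} to the genus $g+1$ Lefschetz sub-fibration over a disk $\mathbb{D}^2\subset\C P^1$ containing all the Lefschetz critical values, obtaining a fiber-preserving embedding into $N\times\mathbb{D}^2$ whose generic fiber is a \emph{standard embedding} of $\Sigma_{g+1}$ in $N$. Next, over the annular region, I would invoke Lemma~\ref{lem:simple_singular_fibration_embedding}, whose conclusion $(2)$ is exactly what permits the match-up: it guarantees the embedding of the $1$--fold simple singular fibration can be arranged so that on the higher genus ($g+1$) boundary fiber it agrees, up to isotopy, with the given standard embedding $\phi$ coming from the Lefschetz piece. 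This compatibility on the genus $g+1$ boundary is the crux of gluing the Lefschetz and fold pieces into a single fiber-preserving embedding of $M\setminus \Sigma_g\times\mathbb{D}^2$ in $N\times(\C P^1\setminus\mathcal{D})$.

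Finally, over the remaining disk $\mathcal{D}$, the fibration is the trivial product $\Sigma_g\times\mathbb{D}^2$, so I would extend the embedding across $\mathcal{D}$ by the obvious product of a standard embedding $\Sigma_g\hookrightarrow N$ with $\mathbb{D}^2$. The boundary $\Sigma_g\times\partial\mathcal{D}=\Sigma_g\times\mathbb{S}^1$ of this trivial piece must be matched with the genus $g$ boundary of the fold region; since Lemma~\ref{lem:simple_singular_fibration_embedding} arranges the lower genus boundary fiber to be a fiber-preserving \emph{standard} embedding as well, and since the monodromy along $\partial\mathcal{D}$ is trivial (the product fibration), this extension is consistent. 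Assembling the three fiber-preserving embeddings that agree on their common boundary tori yields the desired $\psi\colon M\hookrightarrow N\times\C P^1$ commuting with the projection $\pi_2$.

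The main obstacle I expect is the bookkeeping at the two interfaces, ensuring genuine agreement of the three embeddings on the common boundary fibers rather than mere abstract isotopy. In particular, the isotopy of the genus $g+1$ fiber furnished by property $(2)$ of Lemma~\ref{lem:simple_singular_fibration_embedding} must be promoted to an ambient, fiber-preserving isotopy of $N\times\mathbb{S}^1$ so the Lefschetz and fold pieces glue honestly; here the flexibility of the standard embedding (Lemma~\ref{lem:embedding_stand_flexible}) together with Lemma~\ref{lem:embedding_mapping_torus} should supply the needed ambient isotopies realizing mapping-class-group elements. A secondary subtlety is tracking the orientations and the round $1$--handle attachment across $f(Z_f)$ so that the genus jump is realized coherently by the embedded family; but this is controlled by the explicit local fold model $\widehat{\psi_1}$ used in the proof of Lemma~\ref{lem:simple_singular_fibration_embedding}.
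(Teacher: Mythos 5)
Your proposal is correct and follows essentially the same route as the paper: the identical three-piece decomposition of $M$ (Lefschetz sub-fibration over a disk via Theorem~\ref{thm:Lef_fib_embedding}, the $1$--fold singular piece over an annulus via Lemma~\ref{lem:simple_singular_fibration_embedding}, and the trivial $\Sigma_g\times\mathbb{D}^2$ piece), glued using property $(2)$ of Lemma~\ref{lem:simple_singular_fibration_embedding}. In fact your discussion of the interface bookkeeping—promoting the fiberwise isotopy to an ambient fiber-preserving one via flexibility and Lemma~\ref{lem:embedding_mapping_torus}—is more explicit than the paper's own treatment, which simply asserts that the pieces can be arranged to agree on overlaps.
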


\begin{proof}
Let $M$ be a closed oriented $4$--manifold. By Theorem~\ref{thm:existence_SBLF} there exists a smooth map $f:M\rightarrow\C P^1$ which defines SBLF such that the lower genus fiber $\Sigma_g$ of $f$ has genus bigger than $1$. Therefore, We have a decomposition of $M$, $M=X_1\sqcup X_2\sqcup \Sigma_g\times D_2,$ satisfying the following:
\begin{enumerate}
\item $X_1=f^{-1}(D_1)$ with  where $D_1$ is a disc in $\C P^1$ such that  in the interior of $D_1$ contains all Lefschetz 
critical values of $f.$

\item $f$ restricted to $X_2$ is $1$-fold singular fibration. 

\item $\Sigma_g\times D_2=f^{-1}(D_2)$, where $D_2$ is a disc in $\C P^1$ containing no critical points of $f$ with $\{-1\}\times\mathbb{S}^1=\partial D_2$. 

\item Identifications along boundaries of adjacent regions is always via the identity map.

\end{enumerate}

 It follows from  Theorem~\ref{thm:Lef_fib_embedding} and 
 Lemma~\ref{lem:simple_singular_fibration_embedding} that each piece of $M$
 embeds in $N \times \C P^1.$  Also, it is clear from the second property listed in the statement
 of Lemma~\ref{lem:simple_singular_fibration_embedding}  that embeddings of each piece can
 be arranged such that in the overlapping region they agree. This clearly implies
 that we have an embedding of $M$ in $N \times \C P^1$ as claimed. 
 \end{proof}
 
 \begin{remark}\mbox{}
 \begin{enumerate}[label=(\alph*)]
 \item The embedding $\psi:M\to N\times \mathbb{C}P^1$ produced in Theorem~\ref{thm:embedding_in_CP^2_times_CP^1} satisfies $\psi\circ\pi_2=f$, where $f:M\to\mathbb{C}P^1$ is SBLF associated to $M$ and $\pi_2:N\times\mathbb{C}P^1\to \mathbb{C}P^1$ is projection onto second factor of $N\times \mathbb{C}P^1$. In this case, the embedding $\psi$ is termed as \emph{SBLF embedding}.
 
 \item In general, given a fiber bundle $\pi:X^6 \rightarrow \C P^1$ and an embedding of $M^4$
 in $X^6$ such that $\pi$ restricted to $M$ induces a SBLF on $M$ will also be termed as 
 an SBLF embedding.
 
 \end{enumerate}
 \end{remark}
 
 \section{Embeddings in $\mathbb{R}^7$}

In this section we give a new proof of the fact that every closed smooth orientable $4$--manifold
admits a smooth embedding in $\mathbb{R}^7.$

 \begin{theorem}\label{thm:embedding_in_R^7}
 
  Every $4$--manifold admits a smooth embedding  in $\mathbb{R}^7$.
 \end{theorem}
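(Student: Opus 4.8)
The plan is to deduce Theorem~\ref{thm:embedding_in_R^7} as an immediate consequence of Theorem~\ref{thm:embedding_in_CP^2_times_CP^1}, exactly as the introduction anticipates when it says ``Theorem~\ref{thm:embedding_in_CP^2_times_CP^1} immediately implies Theorem~\ref{thm:embedding_in_R^7}.'' The one thing to verify is that there is a suitable $4$--manifold $N$ admitting a \emph{separable Hopf link} whose product $N \times \C P^1$, or a piece thereof, embeds in $\R^7$. The cleanest choice is $N = \R^4$ (or an open $4$--ball, or $\mathbb{D}^4$), since $\R^4$ trivially contains a separable Hopf link: take a standard $\mathbb{D}^4 = \mathbb{D}^2 \times \mathbb{D}^2$ with $l_1 \sqcup l_2 = \partial \mathbb{D}^2 \times \{0\} \sqcup \{0\} \times \partial \mathbb{D}^2$, and observe the two bounding disks $\mathbb{D}^2 \times \{x\}$ and $\{y\} \times \mathbb{D}^2$ (pushed slightly apart) lie in the complement, giving property~(2) of Definition~\ref{def:separable_Hopf_link}.

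First I would check that the proof of Theorem~\ref{thm:embedding_in_CP^2_times_CP^1} goes through when $N$ is a noncompact or bounded $4$--manifold such as $\R^4$; inspecting the arguments, the only property of $N$ ever used is the existence of a separable Hopf link (used in Lemma~\ref{lem:embedding_stand_flexible}, Lemma~\ref{lem:embedding_mapping_torus}, and Lemma~\ref{lem:simple_singular_fibration_embedding}), so the construction produces a fiber--preserving embedding $M \hookrightarrow N \times \C P^1$ for any such $N$. Taking $N = \R^4$ yields an embedding $M \hookrightarrow \R^4 \times \C P^1$.

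Next I would embed the target $\R^4 \times \C P^1$, or rather its relevant compact portion, into $\R^7$. Since the SBLF embedding lands in $N \times \C P^1$ where $\C P^1 = \mathbb{S}^2$, it suffices to embed $\R^4 \times \mathbb{S}^2$ smoothly in $\R^7$. This is standard: $\mathbb{S}^2$ embeds in $\R^3$ with trivial normal bundle, so a tubular neighborhood gives an embedding $\mathbb{S}^2 \times \R^4 \hookrightarrow \R^3 \times \R^4 = \R^7$ (the normal bundle of $\mathbb{S}^2 \subset \R^3$ is trivial of rank $1$, and thickening by an extra $\R^4$ factor yields a rank-$5$ trivial normal bundle, whose total space is an open subset of $\R^7$). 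Composing the two embeddings $M \hookrightarrow \R^4 \times \mathbb{S}^2 \hookrightarrow \R^7$ gives the desired smooth embedding of $M$ in $\R^7$.

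The main point to get right—though it is more bookkeeping than obstacle—is confirming that $\R^4$ (or $\mathbb{D}^4$) legitimately satisfies Definition~\ref{def:separable_Hopf_link} and that none of the earlier lemmas secretly used compactness or closedness of $N$; a quick audit shows they did not, since every step only manipulates the separable Hopf link, its bounding disks, and standard embeddings of surfaces built from these. Once that is in hand, the theorem follows by composition, and I would remark that this recovers the classical Hirsch--Wall result by an entirely different, Lefschetz--fibration--based route.
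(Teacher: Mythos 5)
There is a genuine gap, and it is located exactly at the step you dismiss as ``more bookkeeping than obstacle'': $\R^4$ (equivalently $\mathbb{D}^4$ or $\mathbb{S}^4$) does \emph{not} admit a separable Hopf link in the sense of Definition~\ref{def:separable_Hopf_link}. That definition requires two \emph{disjoint} properly embedded disks $\mathcal{D}_1,\mathcal{D}_2$ in $N\setminus(\mathbb{D}^2\times\mathbb{D}^2)^\circ$ with $\partial\mathcal{D}_1=l_1$ and $\partial\mathcal{D}_2=l_2$ exactly. For $N=\R^4$, compactify at infinity: the complement of the open $4$--ball, together with the point at infinity, is again a $4$--ball $B'$ with $\partial B'=\partial\mathbb{D}^4$, and since the disks are compact they would give disjoint embedded surfaces in $B'$ bounding the two components of the Hopf link. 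But for disjoint bounding surfaces in a $4$--ball the algebraic intersection number computes the linking number, forcing $\mathrm{lk}(l_1,l_2)=\mathcal{D}_1\cdot\mathcal{D}_2=0$, whereas the Hopf link has linking number $\pm 1$. So property~(2) of the definition is unsatisfiable in $\R^4$. Your explicit candidate disks do not work even superficially: $\mathbb{D}^2\times\{x\}$ and $\{y\}\times\mathbb{D}^2$ have boundaries $\partial\mathbb{D}^2\times\{x\}$ and $\{y\}\times\partial\mathbb{D}^2$, which are pushoffs of $l_1,l_2$ lying in $\partial\mathbb{D}^4$, not the curves $l_1,l_2$ themselves, and the honest bounding disks $\mathbb{D}^2\times\{0\}$, $\{0\}\times\mathbb{D}^2$ lie \emph{inside} the ball and intersect at the origin. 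The separability condition is precisely the homological nontriviality that $\R^4$ lacks, and it is essential, not cosmetic: it is what allows the Hopf band $\mathcal{H}$ in Lemma~\ref{lem:embedding_stand_flexible} to be capped off to a closed embedded surface, and the resulting Hopf annulus neighborhoods of Lickorish generators are what make the embedding flexible. (For the standard $\Sigma_g\subset\R^4$ or $\mathbb{S}^4$ without this modification, the mapping classes realizable by ambient isotopy form a proper subgroup, by the very results of Hirose--Yasuhara the paper invokes, so flexibility genuinely fails for your choice of $N$.)

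The paper avoids this by taking $N=\mathbb{S}^2\times\mathbb{S}^2$: in its Kirby diagram the attaching circles of the two $0$--framed $2$--handles form a Hopf link in the boundary of the $0$--handle, and the \emph{cores of the two $2$--handles} furnish the required disjoint disks in the complement of the $0$--handle --- the ambient homology ($H_2(\mathbb{S}^2\times\mathbb{S}^2)\neq 0$) is exactly what defeats the linking-number obstruction above. Theorem~\ref{thm:embedding_in_CP^2_times_CP^1} then embeds $M$ in $\mathbb{S}^2\times\mathbb{S}^2\times\C P^1=\mathbb{S}^2\times\mathbb{S}^2\times\mathbb{S}^2$, and the standard embedding $\mathbb{S}^2\times\mathbb{S}^2\times\mathbb{S}^2\subset\R^7$ (via $(\mathbb{S}^2\times\R)\times(\mathbb{S}^2\times\R)\times\R\subset\R^3\times\R^3\times\R$, using $\mathbb{S}^2\times\R\subset\R^3$) finishes the proof. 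Your final step, embedding the product of a $4$--manifold target with $\mathbb{S}^2$ into $\R^7$ via a trivial normal bundle, is the same idea as the paper's last step and is fine; but the first step of your argument collapses, and it cannot be repaired while keeping $N=\R^4$.
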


   \begin{figure}[H]
 \includegraphics[scale=0.7]{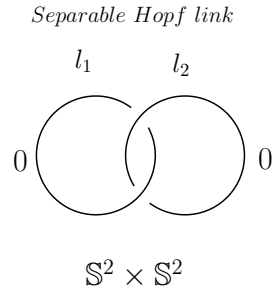}
 \caption{Figure  depicts the kirby diagram of $\mathbb{S}^2\times\mathbb{S}^2.$ Observe that
  attaching circles of $2$--handles  form a Hopf link  in the boundary of the unique $0$--handle, and  they bound disjoint disk corresponding to attaching disks  in $\mathbb{S}^2 \times \mathbb{S}^2$.}
 \label{fig:kirby_diagram_S^2_cross_S^2}
 \end{figure}

\begin{proof} Consider the $4$--manifold $\mathbb{S}^2\times\mathbb{S}^2$. We observe  that 
$\mathbb{S}^2\times\mathbb{S}^2$ admits a separable  Hopf link. This is because $\mathbb{S}^2 \times \mathbb{S}^2$ admits
a handle decomposition consisting of a unique $0$--handle $H_0$ one which a pair of two
$2$--handles are attached such that the attaching circles form a Hopf link in $\partial H_0.$ 
For a pictorial description of this handle decomposition, we refer to  
Figure~\ref{fig:kirby_diagram_S^2_cross_S^2}, where  we have presented 
a Kirby diagram of $\mathbb{S}^2 \times \mathbb{S}^2.$  This clearly implies that the Hopf link consisting of the pair of attaching circles is a separable Hopf 
link.   Thus by Theorem~\ref{thm:embedding_in_CP^2_times_CP^1}, every $4$--manifold embeds in $\mathbb{S}^2\times\mathbb{S}^2\times\mathbb{C}P^1=\mathbb{S}^2\times\mathbb{S}^2\times\mathbb{S}^2.$ Now as $\mathbb{S}^2\times\mathbb{S}^2\times\mathbb{S}^2$ embeds in $\mathbb{R}^7$, proof of the corollary follows.
\end{proof}

\section{Embeddings in $\mathbb{C}P^3$}\label{embed_in_proj}

Let us now establish Theorem~\ref{thm:embedding_4-manifolds}. 
As mentioned in the introduction, the first step of the  proof involves construction of a  specific SBLF on $M \# \C P^2 \# \overline{\C P^2}.$ We then use this SBLF to
produce an embedding of $M \# \C P^2 \# \overline{\C P^2}$ in the blow-up 
$\mathcal{B}_{\mathbb{C}P^1}(\C P^3)$ of $\C P^3$ along $\C P^1.$ hence there is an   embedding  Furthermore, we show that this embedding can be constructed such that when we blow-down 
$\mathcal{B}_{\mathbb{C}P^1}(\C P^3)$, we get an embedding of $M$ in $\C P^3.$
We begin by  reviewing notions related to blow-up and blow-down.  

\subsection{ Generalized Lefschetz pencil} \mbox 

 \begin{definition}[Generalized Lefschetz pencil]\label{def:lefschetz_pencil} Let $M$ be an orientable  smooth $4$--manifold. A \emph{generalized Lefschetz pencil} associated to $M$ is a map $\pi:M\setminus B \to \mathbb{C} P^1$ such that the following properties are satisfied:
 \begin{enumerate}
 \item $B$ is finite.
 \item $\pi:M\setminus B \to \mathbb{C} P^1$ is a Lefschetz fibration.
 \item For every point $b\in B$ there are  parameterizations --
 \emph{not necessarily preserving orientations} -- $\phi: U\subset M \to \mathbb{C}^2$ that satisfies the following:
 
\begin{enumerate}
\item $b\in U$ and $\phi(b)=0\in \mathbb{C}^2$

\item For the map $g:\mathbb{C}^2 \rightarrow \mathbb{C}P^1$ given by $g(z_1,z_2)=\frac{z_2}{z_1}$, the following diagram commutes:
 \begin{equation}
 \begin{tikzcd}
 U  \arrow{r}{\phi}\arrow{d}{\pi}
 & \mathbb{C}^2   \arrow{d}{g}\\
  \mathbb{C}P^1 \arrow{r}{Id} &\mathbb{C}P^1
 \end{tikzcd}.
 \end{equation}
 
 \end{enumerate}
 
 \end{enumerate}
 In this case, we call $B$ as a base locus of a generalized Lefschetz pencil associated to $M$.
 \end{definition}
 
 \begin{remark}\mbox{}
 \begin{enumerate}[label=(\alph*)]
 
\item We would like to emphasis that the notion of generalized Lefschetz pencil defined above is weaker than the notion of  Lefschetz pencil. Generally one demands that  $M$ and $\C P^1$ are
oriented and the parameterizations $\phi: U \subset M \rightarrow \C^2$ and $\psi:V\subset \mathbb{C} P^1\to \mathbb{C}$ are orientation preserving in Definition~\ref{def:lefschetz_pencil}. 
 
 \item If the fibration $\pi:M\setminus B \to \mathbb{C} P^1$ is simplified Lefschetz fibration, the pencil is termed as \emph{generalized simplified broken Lefschetz pencil} or generalized SBLF in short.
 \item If the fibration $\pi:M\setminus B \to \mathbb{C} P^1$ is simplified broken Lefschetz fibration and the parameterizations $\phi: U \subset M \rightarrow \C^2$ and $\psi:V\subset \mathbb{C} P^1\to \mathbb{C}$ are orientation preserving, the pencil is termed as \emph{simplified broken Lefschetz pencil} (SBLP).
\end{enumerate}
 \end{remark}
 
 \subsection{Topological  blow-up and  blow-down of $4$--manifolds}\mbox{}
 
We begin by recalling few standard facts from \cite{GS} about the tautological line bundle over $\mathbb{C}P^1$ and the bundle (complex) dual to this bundle.  
 
 Consider the tautological line bundle $\tau_{\mathbb{C}P}$ over 
 $\mathbb{C}P^1,$ and the bundle $\tau_{\mathbb{C}P^1}^*$ dual to the bundle $\tau_{\mathbb{C}P^1}.$   Let $\mathcal{Z_{\tau}}$ denote the zero section of the bundle $\tau_{\mathbb{C}P^1}, $
 while $\mathcal{Z}_{\tau^{*}} $ denote the zero section of the bundle $\tau_{\mathbb{C}P^1}^*.$
 
 We know that  $\tau_{\mathbb{C}P^1} \setminus \mathcal{Z}_{\tau}, $ and  
  $\tau_{\mathbb{C}P^1}^* \setminus \mathcal{Z}_{\tau^{*}} $  are   diffeomorphic to $\mathbb{R}^4 \setminus \{ 0 \}$ by diffeomorphisms 
 coming from the restrictions of the projection of second factor for the corresponding bundles.  
 We fix this identification of
 the complement of zero sections with $\mathbb{R}^4 \setminus \{0\}$ for both these bundles.

 \begin{definition}[Topological blow-up] Let $M$ a smooth $4$--manifolds. 
 Let $p$ be a point in $M.$  Let $U$ be a neighborhood of $p$ diffeomorphic to $\mathbb{R}^4$
  via a diffeomorphism which sends $p$ to $0 \in \mathbb{R}^4$
 The manifold $\widehat{M}$ obtained by removing  $p$ from $U$ and identifying $U \setminus \{p\}$
 with either $\tau_{\mathbb{C}P^1}^* \setminus \mathcal{Z_{\tau^*}}$ or with 
 $\tau_{\mathbb{C}P^1} \setminus \mathcal{Z_{\tau}}$ is called a  topological blow-up of $M$ along 
 $p.$
 \end{definition}
 
\begin{remark}\mbox{}
\begin{enumerate}[label=(\alph*)]
\item  The operation of topological blow-up of a manifold along a point corresponds to its connected
sum with $\mathbb{C}P^2$ or $\overline{\mathbb{C}P^2}.$ While performing 
a topological blow-up, if we use the tautological
line bundle $\tau_{\mathbb{C}P^1},$ then  we get 
$M \# \overline{\mathbb{C}P^2}.$ On the other hand, if  we use the dual bundle to $\tau_{\mathbb{C}P^2}$, then  we get 
$M \# \mathbb{C}P^2.$

\item Topological blow-up of $M$ along $p$ produces a manifold $\widehat{M}$ admitting an embedded $\mathbb{C}P^1$ with self intersection number $\pm 1.$ Recall that
the usual blow-up always produces an embedded $\mathbb{C}P^1$ with self intersection $-1.$ 

\item Throughout this discussion,  an embedded $\mathbb{C}P^1$ in a $4$--manifold $M$ with self intersection number $\pm 1$ will be called an \emph{exceptional sphere} in $M.$

\end{enumerate}
\end{remark}

\begin{definition}[Topological blow-down] Let $\widehat{M}$ be a smooth $4$--manifold admitting an
embedded $\mathbb{C}P^1$ whose normal bundle is isomorphic to  $\tau_{\mathbb{C}P^1}$ or $\tau_{\mathbb{C}P^1}^*.$ That is  the embedded $\mathbb{C}P^1$ is an exception sphere in 
$\widehat{M}.$  In this case, we can carry out the process exact opposite of the one describe in the 
definition of blow-up, where we remove a tubular neighborhood of $\C P^1$ and replace it
with a $4$-ball. The resulting manifold $M$ that we obtain as a result of this process is 
called a topological  blow-down of $\widehat{M}.$
\end{definition}

\begin{remark} \mbox{}
\begin{enumerate}
%
\item Observe that given a manifold $M$ admitting an embedding $\mathbb{C}P^1$ with 
its self intersection number $\pm 1,$ we can perform topological blow-down operation.

\item Suppose we are given a manifold $M \# \mathbb{C}P^2\# \overline{\mathbb{C}P^2}.$
Let $E_{1}$ and $E_{-1}$ be two embedded $\mathbb{C}P^1$'s corresponding to zero 
sections of $\tau_{\mathbb{C}P^1}^*$ and $\tau_{\mathbb{C}P^1}$ respectively. Suppose we have
$f: M \# \mathbb{C}P^2\# \overline{\mathbb{C}P^2} \rightarrow \mathbb{C}P^1$ be a SBLF such that
the intersection number of a fiber with $E_{1}$ is $1,$ and the intersection with $E_{-1}$ is $-1,$ then
the two operations of blow-downs corresponding to removal of $E_{-1}$ and $E_{1}$ produces 
a generalized SBLP on $M$

\end{enumerate}
\end{remark}

\subsection{Construction of SBLF on $M \# \mathbb{C}P^2 \# \overline{\mathbb{C}P^2}$}\mbox{}

The purpose of this subsection is to establish a SBLF on 
$M \# \mathbb{C}P^2 \# \overline{\mathbb{C}P^2}$ which satisfies the property that intersection 
of each fiber with  two exceptional spheres $E_1$ and $E_{-1}$  corresponding to 
zero sections is $-1$ and $+1$ respectively.

\begin{lemma}\label{lem:existence_of_special_SBLF}
Consider a closed orientable smooth manifold $M \# \mathbb{C}P^2 \# \overline{\mathbb{C}P^2}.$ 
There exists a SBLF  
$f:M \# \mathbb{C}P^2 \# \overline{\mathbb{C}P^2} \rightarrow \mathbb{C}P^1$ which satisfies the following:

\begin{enumerate}
\item The lower genus fiber  has its genus bigger than $1.$
\item The fibration agrees with the standard fibration in a tubular neighborhood of both
exception spheres $E_1$ and $E_{-1}.$
\end{enumerate}

In particular, bowling down the SBLF $f:M \# \mathbb{C}P2 \# \overline{\mathbb{C}P^2} \rightarrow \mathbb{C}P^1$ produces a generalized SBLP on $M.$
\end{lemma}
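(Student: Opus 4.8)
The plan is to produce the desired SBLF on $\widehat{M} := M \# \mathbb{C}P^2 \# \overline{\mathbb{C}P^2}$ by realizing the two exceptional spheres $E_{1}$ and $E_{-1}$ (the zero sections of $\tau^{*}_{\mathbb{C}P^1}$ and $\tau_{\mathbb{C}P^1}$, of square $+1$ and $-1$) as \emph{disjoint sections} of the fibration near which the map is the tautological bundle projection. The key local observation is that blowing up a base point of a generalized Lefschetz pencil, whose model is $(z_1,z_2)\mapsto z_2/z_1$, replaces it precisely by this tautological bundle projection with the exceptional $\mathbb{C}P^1$ as a section. Hence ``agrees with the standard fibration in a tubular neighborhood of $E_{\pm1}$'' is exactly the assertion that $E_{\pm1}$ are sections carrying this local model, so property $(2)$ will be automatic once the sections are built this way, and the remark following the definition of topological blow-down then immediately yields the ``in particular'' clause: blowing down $E_{-1}$ (orientation-preserving) and $E_{1}$ (orientation-reversing) produces a generalized SBLP on $M$. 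Since blow-up and blow-down do not alter the diffeomorphism type of a generic fiber, property $(1)$ for the SBLF is equivalent to the same genus bound for this pencil.

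The construction I would carry out is the following. First apply Theorem~\ref{thm:existence_SBLF} to fix an SBLF $f_0 : M \to \mathbb{C}P^1$ whose lower genus fiber $\Sigma$ has genus $g>1$. The heart of the argument is to upgrade $f_0$ to a generalized SBLP on $M$ with exactly two base points $b_+$ and $b_-$ lying on a common regular fiber: $b_-$ is equipped with an orientation-preserving chart, contributing after blow-up the square-$(-1)$ sphere $E_{-1}$ (the $\overline{\mathbb{C}P^2}$ summand), while $b_+$ is equipped with an orientation-reversing (achiral) chart, contributing the square-$(+1)$ sphere $E_{1}$ (the $\mathbb{C}P^2$ summand). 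Concretely, I would choose a regular value $y_0$ of $f_0$ and two points of $\Sigma = f_0^{-1}(y_0)$, and then produce over $\mathbb{C}P^1$ two disjoint sections of squares $\pm1$ of the blown-up fibration whose local models at $y_0$ are the chiral and achiral tautological bundle projections; blowing these sections back down installs $b_+$ and $b_-$. Throughout, the generic fiber is kept of genus $g$, and the fold locus together with the Lefschetz critical values is kept away from $f_0^{-1}(y_0)$, so that the simplified and broken conditions (connected fibers and a single connected fold circle) and the bound $g>1$ are all retained.

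The hard part is the \emph{global} existence of these two sections, equivalently of the two pencil base points. A base point must lie on the closure of every fiber, so producing it is a global matter and cannot be achieved by a purely local modification near $y_0$: a naive blow-up of a point on a regular fiber places the exceptional sphere \emph{inside} a single (now reducible) fiber rather than as a section. This is exactly the passage from a broken Lefschetz fibration to a broken Lefschetz pencil, and it is where the broken/near-symplectic flexibility of \cite{ADK}, \cite{Ba}, \cite{BO} must be invoked; the square-$(+1)$ section is the more delicate of the two, and it is precisely to accommodate its orientation-reversing local model that the weaker notion of \emph{generalized} (achiral) Lefschetz pencil was introduced in Definition~\ref{def:lefschetz_pencil}. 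Once both sections are in place with the tautological local model, property $(2)$ holds by construction, property $(1)$ is inherited from the genus bound on $f_0$, and the final ``in particular'' statement follows from the blow-down remark.
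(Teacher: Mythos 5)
Your local analysis is sound: you correctly identify that ``agrees with the standard fibration near $E_{\pm 1}$'' means the exceptional spheres are sections carrying the tautological bundle projection, that blowing down a chiral/achiral pair of such sections yields a generalized SBLP, and that the base points are a global feature that cannot be installed by a local modification near one fiber. But that last observation is precisely where your proof stops being a proof. The central claim of your second paragraph --- that one can ``produce over $\mathbb{C}P^1$ two disjoint sections of squares $\pm 1$ of the blown-up fibration'' (equivalently, upgrade the SBLF $f_0$ on $M$ to a generalized pencil with one chiral and one achiral base point on a common fiber) --- is asserted, and your third paragraph defers it to the ``flexibility of \cite{ADK}, \cite{Ba}, \cite{BO}.'' None of those references supplies this: \cite{ADK} produces broken pencils on near-symplectic manifolds with only standard (chiral, square $-1$) base points, and \cite{Ba}, \cite{BO} produce broken fibrations, not pencils; in particular no cited result yields an achiral square-$(+1)$ base point coexisting with a chiral one, nor the relative statement you would need (modifying $f_0$ to a pencil while controlling it near the chosen regular fiber). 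So the proposal reduces the lemma to a statement essentially equivalent to the lemma itself and leaves that statement unproved.

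The paper closes exactly this gap by running the construction in the opposite direction, on $\widehat{M}=M\#\mathbb{C}P^2\#\overline{\mathbb{C}P^2}$ rather than on $M$. It first builds a connected embedded surface $\Sigma\subset\widehat{M}$ with $[\Sigma]^2=0$, genus bigger than three, $\Sigma\cdot E_1=+1$ and $\Sigma\cdot E_{-1}=-1$ (two spheres meeting the exceptional spheres appropriately, joined by an ambient connected sum with a surface embedded in a $4$--ball). It then prescribes the map by fiat on $\mathcal{N}(\Sigma)\cup\mathcal{N}(E_1)\cup\mathcal{N}(E_{-1})$: the trivial fibration $\Sigma\times\mathbb{D}^2\to\mathbb{D}^2$ over the southern hemisphere of $\mathbb{C}P^1$ on $\mathcal{N}(\Sigma)$, and the disk-bundle projections $\mathcal{N}(E_{\pm1})\to E_{\pm1}\cong\mathbb{C}P^1$ on the exceptional neighborhoods. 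This partial map is extended to a generic smooth map of $\widehat{M}$, and the key external input is \cite[Remark 4.5]{BO1} --- the relative form of the simplification algorithm behind \cite[Theorem 6.5]{BO1} --- which converts the generic map into an SBLF by modifications supported away from the region where the map was prescribed. Thus property (2) holds by construction, with no section-finding argument ever needed; the genus bound (1) is obtained at the end by rerunning the trisection argument of Theorem~\ref{thm:existence_SBLF} (it is not inherited from a pencil on $M$, as you propose); and the intersection conditions persist because every fiber of the resulting SBLF is homologous to $\Sigma$. To rescue your route you would need precisely such a relative extension-and-simplification theorem, and identifying and invoking it is the step your proposal is missing.
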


In  \cite[Theorem:6.5]{BO1},  I. Baykur and O. Saeki established the existence of simplified broken 
Lefschetz pencil  for any near symplectic manifold for any near symplectic manifold admitting
connected singular locus for near symplectic structure.  It is easy 
to see that following the  proof of \cite[Theorem:6.5]{BO1} -- essentially verbatim -- provides a
proof of Lemma~\ref{lem:existence_of_special_SBLF}.

\begin{proof}
To begin with, notice that there exists  an embedded surface $\Sigma$  in 
$M \# \mathbb{C}P^2 \# 
\overline{\mathbb{C}P^2}$ which satisfy the following properties:

\begin{itemize}
\item The self intersection of $\Sigma$ is 0
\item $\Sigma \cap E_{1} = +1$ and $\Sigma \cap E_{-1} = -1.$
\item $\Sigma$ is connected and the genus of Sigma is bigger than three. 
\end{itemize}

Observe that  since the self inter section of $E_1$ is $+1$ and $E_{-1} = -1$ it is easy to construct
a disconnected surface consisting of disjoint union of two spheres. By making connected sums of
these two sphere with an embedded surface  bounding a $3$--dimensional handle-body and
embedded in $B^4,$ it is easy to construct such a surface. 
    
Consider the map $\pi: \Sigma \times \mathbb{D}^2 \rightarrow \mathbb{D}^2,$ corresponding to 
the projection on  the  second factor, and regard $\mathbb{D}^2$ as embedded in $\mathbb{C}P^1$ as  a southern hemisphere.  This allows us to  regard $\pi$ as a map from a tubular neighborhood 
    $\mathcal{N} (\Sigma)$ to southern hemisphere. Next, construct  map to a map from
    $g:\mathcal{N}(\Sigma) \cup \mathcal{N}(E_1) \cup \mathcal{N}(E_{-1}) \rightarrow \mathbb{C}P^1$ which satisfies the following:
    properties:
\begin{enumerate}
\item The map when restricted $\mathcal{N}(E_1)$ and $\mathcal{N}(E_{-1}$ is the surjection 
on $\mathbb{C}P^1$ coming from the bundle projections $\pi_{E_1}: \mathcal{N}(E_1) \rightarrow
E_1,$ and $ \pi_{E_{-1}}: \mathcal{N}(E_{-1}) \rightarrow E_{-1}.$
\item  The map agrees with $\pi$ when restricted to $\mathcal{N}(\Sigma).$
\end{enumerate}    

Next, extend the map $g$ to a generic smooth map 
$\widehat{f}: M \# \mathbb{C}P^2 \# \overline{\mathbb{C}P^2} \rightarrow \mathbb{C}P^1.$  
According  to \cite[Remark:4.5]{BO1}, this map can be modified to produce a SBLF 
$\widehat{f}:M \# \mathbb{C}P2 \# \overline{\mathbb{C}P^2} \rightarrow \mathbb{C}P^1$ such that
all the modification performed while obtaining the SBLF from $g$ are performed alway from 
the region where $g$ is defined. 

Next, we convert the SLBF $\widehat{f}:M \# \mathbb{C}P^2 \# \overline{\mathbb{C}P^2} \rightarrow \mathbb{C}P^1$ to an SLBF 
$f:M \# \mathbb{C}P2 \# \overline{\mathbb{C}P^2} \rightarrow \mathbb{C}P^1$  whose lower genus fiber is bigger than $3$ by applying the technique from the proof of Theorem~\ref{thm:existence_SBLF}.  The SLBF $f:M \# \mathbb{C}P2 \# \overline{\mathbb{C}P^2} \rightarrow \mathbb{C}P^1$ can be ensured to satisfy the required because every fiber of
$f$ is homologous to the original fiber $\Sigma$ and hence the intersection of fibers of
$f$ has same property that $\Sigma$ had. This completes our argument.

\end{proof}

\begin{remark}\label{rmk:notation_special_SBLF}\mbox{}

\noindent From now on the SBLF  on $M \# \ C P^2 \# \overline{\ C P^2}$described in the statement of Lemma~\ref{lem:existence_of_special_SBLF} will be denoted by the notation $\pi_{spl}: M \# \C P^2 \# \overline{\C P^2} \rightarrow \C P^1. $
\end{remark}

\subsection{Blow-up and blow-down of $\C P^3$ along $\C P^1$}\mbox{}

Let us begin this sub-section by  making  a convention. By a standard
$\mathbb{C}P^1$ in $\overline{\mathbb{C}P^2},$ we mean a $\mathbb{C}P^1$ embedded
in $\overline{\mathbb{C}P^2}$ with its normal bundle isomorphic to the dual of the tautological
line bundle over $\mathbb{C}P^1.$ On the other hand, by a standard $\mathbb{C}P^1$ in
$\mathbb{C}P^n,$ we mean 
$\{[z_1,z_2, \cdots ,z_n] | z_i = 0 \hspace{0.1cm} \forall \hspace{0.1cm}  i \geq 3\},$ where $[z_1,
z_2, \cdots, z_n]$ denotes the homogeneous coordinates of $\mathbb{C}P^n.$

Consider $\C P^3$ and a standard $\C P^1$ embedded in it.  Fix 
a local trivialization $\mathbb{D}^2 \times \C^2$  of the normal bundle $\mathcal{N} (\C P^1)$ of $\C P^1$
in $\C P^3$.  

Now consider $\mathbb{D}^2 \times \C^2 \times \C P^1$ and a subset $V$ of 
$\mathbb{D}^2 \times \C^2 \times \C P^1 $ given by, 

$$V = \{(w, z_1, z_2, l) | \hspace{0.15cm} \|z_1^2\| + \| z_2^2 \| \leq 1 \hspace{0.15cm} \text{and}
\hspace{0.15cm} (z_1, z_2) \in l \},$$ 

where a point $l$ in $\C P^1$ is identified with the complex linear subspace  corresponding to that point. 

Now, observe that the complement of $\mathbb{D}^2 \times \{(0,0)\} \times \C P^1$ in $V$ can be identified with the complement of $\mathbb{D}^2 \times \{(0,0)\}$ in
$\mathbb{D}^2 \times \C^2.$

Choose two local trivializations  $U_1 \times \mathbb{C}^2$ and $U_2 \times \mathbb{C}^2$ over open set $U_1$ and $U_2$
such that $U_1$ and $U_2$ cover $\C P^1.$  By the (topological) blow-up of $\C P^3$ along $\C P^1$
we mean the operation of removing $U_i \times \{(0,0)\}$ from $U_i \times \C^2,$ for each $i,$ and
replacing it with the interior of $V$ as discussed in the previous paragraph. 

\begin{remark}\mbox{}

\begin{enumerate}
\item First of all, observe  that since the real normal bundle of $\mathbb{C}P^1$ in $\mathbb{C}P^3$ 
is trivial,  the manifold $\mathcal{B}_{\C P^1}(\C P^3)$ is
diffeomorphic to $\C P^1\times \C P^2.$ 

\item \emph{Exceptional divisor} of $\mathcal{B}_{\mathbb{C}P^1}(\mathbb{C} P^3)$ is the union of $\mathbb{D}^2\times\lbrace (0,0)\rbrace \times \mathbb{C}P^1$ over a finite collection $V_s$ of trivializations of the bundle $\mathcal{N}(\C P^1)$. Again notice that the triviality of the normal bundle 
of $\mathbb{C}P^1$ in $\mathbb{C}P^3$ implies   that the exceptional divisors is diffeomorphic to  $\mathbb{C}P^1\times \mathbb{C}P^1.$ 

\end{enumerate}
\end{remark}

 The  notion of blow-up discussed above is a  particular case of blow-up of a 
manifold along a sub-manifold. We refer \cite[p. 196,602]{GH} for a detailed
discussion on blow-ups.

By a blow-down of $\mathcal{B}_{\C P^1}(\C P^3)$ we will mean the process  exactly
opposite to the process of blow-up.  More precisely, let $\mathcal{B}_{\C P^1}(\C P^3)$
be obtained by blowing up a $\C P^1.$ Let  $E$ be the exceptional divisor obtained
as a result of the blow-up.  By blow-down of $\mathcal{B}_{\C P^1}(\C P^3)$, we mean 
removal of a tubular neighborhood of $E$ and replacing it by a tubular neighborhood
of $\C P^1$ in $\C P^3.$

%
%
%

We  say that $\C P^3 $ is obtained from $\mathcal{B}_{\C P^1}(\C P^3)$ by blowing down along $E.$ Since $E$ is diffeomorphic to $\C P^1\times\C P^1,$ we sometimes do not distinguish between
$E$ and $\C P^1 \times \C P^1$ and say that $\C P^3$ is obtained from $\mathcal{B}_{\C P^1}(\C P^3)$ by blowing down along $\C P^1 \times \C P^1.$

We end this subsection with the following:

\begin{lemma}\label{lem:SBLP_embedding}
Let $M \# \C P^2 \#  \overline{\C P^2}$ be a smooth manifold. Let
 $\pi_{spl}: M \# \C P^2 \# \overline{\C P^2} \rightarrow \C P^1$ be 
the  SBLF on $M \# \C P^2 \#  \overline{\C P^2}$ as in the statement of Lemma~\ref{lem:existence_of_special_SBLF}  . If there exists a SBLF embedding of 
$M \# \C P^2 \# \overline{\C P^2}$ in 
 $\mathcal{B}_{\mathbb{C}P^1}(\C P^3)$ such that  each fiber of SBLF intersects the standard 
 $\C P^1$ of the fiber $\C P^2$ of $\mathcal{B}_{\C P^1} (\C P^3)$ in two distinct but fixed points, then there exist an embedding of $M$ in $\C P^3$ such that the standard  pencil of $\C P^3$ induces the  generalized SBLP of $M$ corresponding to the SBLF  of $M \# \C P^2 \# \overline{\C P^2}$
\end{lemma}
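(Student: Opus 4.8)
The plan is to feed the given SBLF embedding $\Phi\colon M\#\C P^2\#\overline{\C P^2}\hookrightarrow\mathcal{B}_{\C P^1}(\C P^3)$ through the blow-down map $\beta\colon\mathcal{B}_{\C P^1}(\C P^3)\to\C P^3$ and to check that $\beta$ restricts on the image to a topological blow-down of $M\#\C P^2\#\overline{\C P^2}$ along its two exceptional spheres, leaving an embedded copy of $M$. I first record the geometry of $\beta$. Writing $\ell$ for the standard $\C P^1\subset\C P^3$ that is blown up, the fibration $\mathcal{B}_{\C P^1}(\C P^3)\to\C P^1$ resolves the standard pencil of $\C P^3$ based along $\ell$; its exceptional divisor $E\cong\ell\times\C P^1$ meets each $\C P^2$-fibre precisely in the standard $\C P^1$ of that fibre, the restriction $\beta|_E$ is the projection $\ell\times\C P^1\to\ell$ collapsing every fibre $\{p\}\times\C P^1$ to the point $p\in\ell$, and over $\C P^3\setminus\ell$ the map $\beta$ is a fibre-preserving diffeomorphism intertwining the fibration with the pencil.

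Next I would analyse $W:=\Phi(M\#\C P^2\#\overline{\C P^2})$. By hypothesis each SBLF fibre $\Sigma$ meets the standard $\C P^1$ of its $\C P^2$-fibre in two distinct points $p_1,p_2\in\ell$ that do not depend on the base point, so $W\cap E=S_1\sqcup S_2$ with $S_i=\{p_i\}\times\C P^1$: two disjoint embedded $2$-spheres, each a fibre of $\beta|_E$. Since $\pi_{spl}$ agrees with the standard fibration near $E_1$ and $E_{-1}$ (property (2) of Lemma~\ref{lem:existence_of_special_SBLF}) and $\Phi$ is an SBLF embedding, $\Phi$ carries these neighbourhoods into the part of $\mathcal{B}_{\C P^1}(\C P^3)$ where the fibration is standard, hence $E_1$ and $E_{-1}$ are mapped onto $S_1$ and $S_2$. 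Consequently $S_1,S_2$ are exceptional spheres of $W$ with normal bundles $\tau_{\C P^1}^*$ and $\tau_{\C P^1}$; the intersection numbers $\Sigma\cdot E_1=+1$ and $\Sigma\cdot E_{-1}=-1$ from Lemma~\ref{lem:existence_of_special_SBLF} are exactly what pin down the self-intersections as $+1$ and $-1$, since under the transverse intersection $W\cap E=S_1\sqcup S_2$ the normal bundle of $S_i$ in $W$ is identified with the restriction of the normal bundle of $E$ to $S_i$, with the orientation of that identification dictated by the sign of $\Sigma\cdot E_i$.

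With the spheres identified, I would carry out the blow-down. Because $S_1,S_2$ are fibres of $\beta|_E$, the map $\beta$ collapses them to $p_1,p_2\in\ell$; and because $\Phi$ coincides with the standard fibration near $E_{\pm1}$, a neighbourhood of $S_i$ in $W$ sits in $\mathcal{B}_{\C P^1}(\C P^3)$ in the standard blow-up model, so $\beta|_W$ is precisely the topological blow-down of $W=M\#\C P^2\#\overline{\C P^2}$ along $E_1$ and $E_{-1}$. Blowing down $E_1$ removes the $\C P^2$ summand and $E_{-1}$ the $\overline{\C P^2}$ summand, so $\beta(W)$ is an embedded copy of $M$ in $\C P^3$. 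Over $\C P^3\setminus\ell$ the map $\beta$ intertwines the fibration with the pencil, so $\pi_{spl}$ descends to a simplified broken Lefschetz fibration on $M\setminus B$, where $B=M\cap\ell=\{p_1,p_2\}$ is the finite base locus through which all fibres pass; near $p_2$ the collapse is the orientation-preserving blow-down of $\overline{\C P^2}$ and near $p_1$ the orientation-reversing blow-down of $\C P^2$, each giving the local model $g(z_1,z_2)=z_2/z_1$ of Definition~\ref{def:lefschetz_pencil} (one chart orientation-preserving, one not). Hence the standard pencil of $\C P^3$ induces on $M$ the generalized SBLP corresponding to $\pi_{spl}$.

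The step I expect to be the main obstacle is the local matching in the third paragraph: verifying that $\beta|_W$ is genuinely the topological blow-down of $M\#\C P^2\#\overline{\C P^2}$, which requires identifying the standard blow-up chart of $\mathcal{B}_{\C P^1}(\C P^3)$ along $E$ with the chart in which $\Phi$ agrees with the standard fibration near $E_{\pm1}$, so that the collapse of each $S_i$ glues smoothly to the surrounding embedding. The accompanying sign bookkeeping --- that the $+1$ sphere $E_1$ produces the orientation-reversing pencil chart (removing $\C P^2$) while the $-1$ sphere $E_{-1}$ produces the orientation-preserving one (removing $\overline{\C P^2}$) --- is where the data $\Sigma\cdot E_{\pm1}=\pm1$ of Lemma~\ref{lem:existence_of_special_SBLF} is indispensable, and it is precisely the reason the induced pencil is only a \emph{generalized} SBLP rather than an honest one.
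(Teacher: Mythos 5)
Your proposal is correct and follows essentially the same route as the paper's proof: both use the fixed-intersection-point hypothesis together with fiber-preservation to identify the exceptional spheres $E_{\pm 1}$ with fibers $\{p_\pm\}\times \C P^1$ of the exceptional divisor of $\mathcal{B}_{\C P^1}(\C P^3)$, note that in the local model $U_1\times W$ the blow-down collapses exactly these spheres, and conclude that blowing down yields $M\subset \C P^3$ with the standard pencil inducing the generalized SBLP. Your additional bookkeeping of normal bundles and orientations (explaining why the result is only a \emph{generalized} pencil) is carried out in more detail than in the paper, which leaves those sign checks implicit, but the underlying argument is the same.
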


\begin{proof} 
Let $E_1$ and $E_{-1}$ be two exceptional divisors of $M \# \C P^2 \overline{\C P^2}.$  
Recall the exceptional divisor of $\mathcal{B}_{\C P^1}(\ C P^3)$ consist of a  union of two
local exceptional divisors of the type $U_i \times  W,$ where $W \subset \C P^1 \times \C^2$
consist of $\{(l, z_1, z_2)| (z_1, z_2) \in l\}.$ Since by hypothesis the fiber of $\pi_{spl}$ intersects
the standard $\C P^1$ inside $\C P^2$ in a pair of fixed point, we can assume that
of a tubular neighborhoods of an exceptional divisors $E_{\pm 1}$ is contained in $U_i \times W,$
and since the embedding is fiber preserving  it consist of $\{p_{\pm} \} \times W \subset U_1\times W.$

Furthermore, by the definition of the blow-up, the fibration on $\mathcal{B}_{\C P^1}(\C P^3)$ restricted to  $U_1 \times W $ can be assumed to be given by $(u, l, z_1, z_2) \rightarrow l.$ This clearly implies the when we blow-down $\mathcal{B}_{\C P^1}(\C P^3)$ along the exceptional
divisor $\C P^1 \times \C P^1$ we get $M \subset \C P^3$ with standard pencil of $\C P^3$ inducing 
the generalized SBLP on $M$ associated to SBLF $\pi_{spl}:M \# \C P^2 \overline{\C P^2} \rightarrow \C P^1.$ 

\end{proof}

\subsection{Embeddings in $\mathcal{B}_{\C P^1}(\C P^3)$}\mbox{}

In this sub-section we establish SLBF embedding of the special SLBF $\pi_{spl}: M \# \C P^2 \# 
\overline{\C P^2} \rightarrow \C P^1$ in $\mathcal{B}_{\C P^1}(\C P^3).$

 \begin{figure}[H]
 \includegraphics[scale=.8]{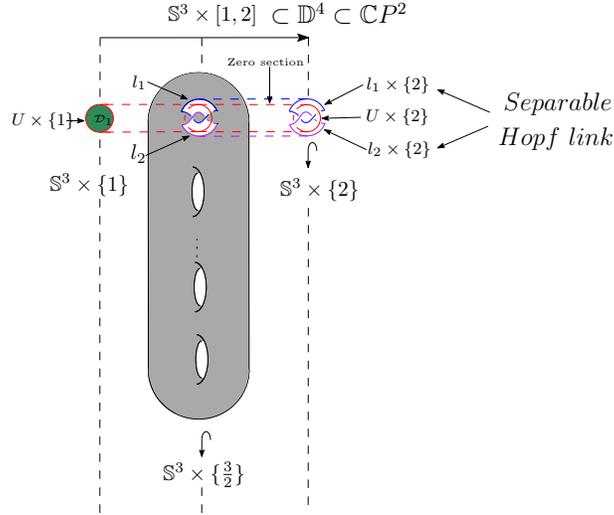}
 \caption{ Figure  depicts an embedded surface  in $\mathbb{C}P^3$ which is flexible and 
 in a standard position. The diagram focus on a collar $\mathbb{S}^3 \times [1,2]$ of a 
 $4$--ball $\mathbb{D}^4$ regarded as the unique zero handle $H_0$ of $\mathbb{C}P^2.$ The circle
 $U$ is the attaching circle of the unique $2$--handle $H_2$. $U\times [1,2]$ with the core
 disk attached at $U\times \{2\}$ and the green disk at $U\times \{1\}$ forms the 
 standard $\mathbb{C}P^1$ embedded in $\mathbb{C}P^2$}
 \label{fig:kirby_diagram_CP^2}
 \end{figure}

\begin{proposition}\label{prop:embedding_in_CP^2_times_CP^1}
Let $M$ be a closed orientable smooth $4$-manifold. Let $f:M \rightarrow \mathbb{C}P^1$ be a SBLF with the lower genus fiber having genus bigger than $1.$ There exists a SLBF embedding of
$M$ in $\mathcal{B}_{\C P^1}(\C P^3)$ such that each fiber of $SLBF$ intersect 
the standard $\mathbb{C}P^1$ in the fibre $\mathbb{C}P^2$ in a pair of cancelling intersection points.
\end{proposition}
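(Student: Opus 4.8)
The plan is to run the argument of Theorem~\ref{thm:embedding_in_CP^2_times_CP^1} with the choice $N=\C P^2$, replacing the trivial product $\C P^2\times\C P^1$ by the bundle $\mathcal{B}_{\C P^1}(\C P^3)\to\C P^1$, and adding one new ingredient: control over how each fiber of the SBLF meets the exceptional divisor. The first observation is that $\C P^2$ admits a separable Hopf link in the sense of Definition~\ref{def:separable_Hopf_link}; as already indicated in the introduction, this can be read off from the handle picture of Figure~\ref{fig:kirby_diagram_CP^2}, where the attaching circle $U$ of the unique $2$--handle, together with a suitable unknot bounding the indicated disk, realizes such a link in the boundary of the $0$--handle. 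Consequently Lemma~\ref{lem:embedding_stand_flexible}, Theorem~\ref{thm:Lef_fib_embedding} and Lemma~\ref{lem:simple_singular_fibration_embedding} all apply verbatim with $N=\C P^2$. Recall moreover that $\mathcal{B}_{\C P^1}(\C P^3)\to\C P^1$ is a $\C P^2$--bundle that is trivial away from the exceptional divisor $E$, and that $E$ meets each fiber $\C P^2$ precisely in the standard line; this line is the standard $\C P^1$ of the proposition.

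The heart of the new construction is to pin down the position of a generic fiber relative to the standard $\C P^1$ inside a single fiber $\C P^2$. Using the standard embedding of Lemma~\ref{lem:embedding_stand_flexible}, the surface $\Sigma_g$ is placed in the middle level $\mathbb{S}^3\times\{3/2\}$ of the collar, while the standard $\C P^1$ passes through this level along the core circle $U\times\{3/2\}$ of Figure~\ref{fig:kirby_diagram_CP^2}. First I would isotope $\Sigma_g$ so that $U\times\{3/2\}$ pierces it transversally in exactly two points $p_+$ and $p_-$ of opposite sign. This is possible because $\Sigma_g$ is null-homologous in $\C P^2$, so that its algebraic intersection with the standard $\C P^1$ is forced to vanish, while two geometric intersections of opposite sign can always be produced by a finger move. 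Around each of $p_\pm$ I would further arrange, using the standard-position disks of Definition~\ref{def:stand_position}, that the pair (fiber, standard $\C P^1$) is modeled on the pencil chart $(z_1,z_2)\mapsto z_2/z_1$ of Definition~\ref{def:lefschetz_pencil}, the two points carrying opposite local orientations; this is exactly what ``cancelling'' records and what makes the later blow-down of Lemma~\ref{lem:SBLP_embedding} produce a generalized pencil. The points $p_\pm$ together with their standard neighborhoods can be chosen disjoint from a complete set of Lickorish generators, so that the flexibility isotopies, being supported near the Hopf annuli along those generators, preserve the local picture at $p_\pm$.

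With this fiberwise normalization in hand, I would reassemble the global embedding exactly as in the proof of Theorem~\ref{thm:embedding_in_CP^2_times_CP^1}. Decompose $M=X_1\sqcup X_2\sqcup(\Sigma_g\times\mathbb{D}^2)$, where $X_1$ carries all Lefschetz critical values, $X_2$ is the $1$--fold part, and $\Sigma_g\times\mathbb{D}^2$ is a trivial piece. I would place $X_1$ and the fold circle over a disk in the base $\C P^1$ above which $\mathcal{B}_{\C P^1}(\C P^3)$ is a trivial product $\C P^2\times\mathbb{D}^2$, and embed $X_1$ by Theorem~\ref{thm:Lef_fib_embedding} and $X_2$ by Lemma~\ref{lem:simple_singular_fibration_embedding}, in each case feeding in the normalized standard embedding of the previous paragraph. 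Since the vanishing cycles and the round $1$--handle can be taken disjoint from $p_\pm$ and their neighborhoods, the two cancelling intersections with the standard $\C P^1$ survive over $X_1$ and $X_2$, and agree with those over the higher-genus fibers used in Lemma~\ref{lem:simple_singular_fibration_embedding}. Finally I would close up the trivial piece over the remaining disk, absorbing the clutching of the $\C P^2$--bundle into the mapping-class freedom supplied by flexibility, to obtain the desired SBLF embedding in which every fiber meets the standard $\C P^1$ of its fiber $\C P^2$ in the cancelling pair $p_+,p_-$.

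I expect the main obstacle to be the global coherence of the two intersection points rather than any single local model: one must keep the marked pair $p_\pm$ rigid while the fiber is transported around the base by both the SBLF monodromy and the clutching of the bundle $\mathcal{B}_{\C P^1}(\C P^3)\to\C P^1$. This is precisely where the flexibility of the standard embedding is essential, since it realizes the required transport by ambient isotopies of $\C P^2$ that fix the local pencil picture at $p_\pm$; verifying that the vanishing cycles and the fold surgery can indeed be arranged disjoint from $p_\pm$, so that these isotopies genuinely preserve the cancelling pair, is the one point demanding care beyond the proof of Theorem~\ref{thm:embedding_in_CP^2_times_CP^1}.
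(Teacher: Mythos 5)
Your overall strategy is the same as the paper's: take $N=\C P^2$ with the separable Hopf link of Figure~\ref{fig:kirby_diagram_CP^2}, exploit the triviality of $\mathcal{B}_{\C P^1}(\C P^3)\rightarrow \C P^1$ away from the exceptional divisor, rerun the machinery of Theorem~\ref{thm:embedding_in_CP^2_times_CP^1} (Lemma~\ref{lem:embedding_stand_flexible}, Theorem~\ref{thm:Lef_fib_embedding}, Lemma~\ref{lem:simple_singular_fibration_embedding}) on the decomposition $M=X_1\sqcup X_2\sqcup(\Sigma_g\times\mathbb{D}^2)$, and add a fiberwise normalization of the embedded fiber relative to the standard $\C P^1$; your disjointness bookkeeping (vanishing cycles, the fold ball, and the Lickorish annuli kept away from the marked points) is exactly how the paper keeps the intersection pattern coherent, and your local pencil charts at $p_\pm$ also deliver the Hopf-link condition on $\partial\mathcal{N}(\C P^1)$ that the paper's proof lists as its third required property.

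There is, however, one genuine gap, and it sits at the single point where this proposition has content beyond Theorem~\ref{thm:embedding_in_CP^2_times_CP^1}: you assert without argument that the standard embedding of $\Sigma_g$ is null-homologous in $\C P^2$, and deduce from this that its algebraic intersection with the standard $\C P^1$ vanishes. Since $H_2(\C P^2;\Z)\cong\Z$ is detected precisely by intersection with $[\C P^1]$, ``null-homologous'' and ``algebraically cancelling intersection with the standard $\C P^1$'' are the \emph{same} statement, so your reasoning is circular exactly where a proof is needed. Nor is null-homology automatic for a standard embedding: the closed surface of Lemma~\ref{lem:embedding_stand_flexible} is a piece in the collar $\mathbb{S}^3\times[1,2]$ capped off by the two disks $\mathcal{D}_1,\mathcal{D}_2$ supplied by the separable Hopf link, so its homology class is that of the capping sphere $\mathcal{D}_1\cup C\cup\mathcal{D}_2$ (with $C$ an annulus in the $4$--ball) and depends on the chosen caps; for the link of Figure~\ref{fig:kirby_diagram_CP^2} one cap is a parallel copy of the core of the unique $2$--handle of $\C P^2$, so the vanishing of the class must be checked against the handle decomposition rather than quoted. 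The paper fills exactly this step geometrically: it observes that the constructed $\Sigma_g$ meets the standard $\C P^1$ (core of $H_2$ union a disk $\mathbb{D}$ bounded by $U\times\{2\}$) in exactly two points, and certifies that they cancel by pushing $\mathbb{D}$ down to a disk bounded by $U\times\{1\}$, producing an isotopic copy of $\C P^1$ disjoint from $\Sigma_g$ --- so intersection number zero (equivalently, null-homology) is the \emph{conclusion} of an explicit isotopy for this particular embedding, not an input. Once you replace your homological assertion by this verification (or an equivalent one for your chosen standard embedding), the rest of your plan, including the finger-move normalization, goes through as in the paper.
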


\begin{proof}  We will follow the line of argument  we used to establish Theorem~\ref{thm:embedding_in_CP^2_times_CP^1}. The only 
difference is that the fibration $\mathcal{B}_{\C P^1}(\C P^3)$ is not a trivial fibration. However,
since the normal bundle of $\C P^1$ in $\C P^3$ is trivial, we note that this bundle as a real
bundle is trivial  provided we remove the section of the fiber bundle $\mathcal{B}_{\C P^1}(\C P^3) \rightarrow \C P^1$ corresponding to the exceptional divisor.  Hence, we first consider neighborhoods of exceptional divisors $E_1$ and $E_{-1}$ of $M \# \C P^2 \# \overline{\C P^2},$ and embed them in a tubular neighborhood of the exceptional divisor 
$\C P^1 \times \C P^1$ of $\mathcal{B}_{\C P^1}(\C P^3)$ such that the embedding is
fiber preserving.   

In order to produce this embedding recall that a tubular neighborhood of the  exceptional divisor $\C P^1 \times \C P^1$ is
union of two open sets  $U_i \times W, i = 1,2.$  Consider $U_1 \times W,$ and  let us
denote by $\pi$ the fibration $\pi: \mathcal{B}_{\C P^1}(\C P^3) \rightarrow \C P^1$ obtained
via blow-up of the standard pencil of $\C P^3.$

Next,  consider a pair of points $p_{+},$ $p_{-}$ in $U_1,$ and consider 
spheres $\{p_{\pm}\} \times \C P^1$ embedded in $U_1 \times W$ Since tubular neighborhood
of $E_{\pm 1}$ is isomorphic to tubular neighborhood of any sphere in $U_1 \times W$ of 
the form $\{p\} \times \C P^1,$ where $p$ is a point in $U_1,$  we get the there exist
an embedding of small neighborhoods of $E_{\pm 1}$ in a neighborhood of the exceptional 
divisor $\C P^1 \times \C P^1$ such that $\pi_{spl}$ restricted to this neighborhood agrees
with restriction of $\pi$ on the embedded neighborhoods.

Observe that the intersection of the embedded neighborhoods of $E_{\pm 1}$ with a fiber
of the fibration $\pi: \mathcal{B}_{\C P^1} (\C P^3)$ is a pair of disk satisfying the property that
the intersection of this pair of disk with the boundary of a small tubular neighborhood of
$\C P^1 \subset \C P^2$ is a Hopf link. Furthermore, observe that the since the embedding
of the neighborhood of $E_1$ with  tubular neighborhood of $\{p_{+}\} \times \C P^1$ is
orientation reversing, and the embedding of neighborhood of $E_{+1}$ with $\{p_{-} \times \C P^1$
is orientation preserving. This implies that if we establish the following:

\begin{enumerate}
\item $\C P^2$ admits a separable Hopf link,
\item there exists an embedding of any surface of genus $g$ in $\C P^2$ which is
standard embedding,
\item the embedded  surface $\Sigma_g$ intersects the standard $\C P^1$ contained in $\C P^2$ in a pair of algebraically cancelling point, and $\Sigma_g \cap \partial \mathcal{N}(\C P^1)$ 
is a Hopf link in $\partial \mathcal{N}(\C P^1),$ where 
$\mathcal{N}(\C P^1)$ is a fixed open tubular neighborhood of $\C P^1$ in $\C P^2,$  
\end{enumerate}

\noindent  then the triviality of the fibration $\pi: \mathcal{B}_{\C P^1}(\C P^3)$ 
an argument similar  to the one which establishTheorem~\ref{thm:embedding_in_CP^2_times_CP^1} implies required  SBLF embedding of $M \# \C P^2 \# \overline{\C P^2}$ in 
$\mathcal{B}_{\C P^1}(\C P^3).$

Hence, the task at our hand is to establish an embedding of a surface satisfying the three properties
listed above. We now proceed to produce such an embedding.

 To begin with, we regard $\overline{\mathbb{C}P^2}$ as a handle-body with 
the $0$--handle $H_0$ corresponding to $B^4(0,2)$ -- the $4$--ball of radius $2$ in $\mathbb{C}^2$ with its center at the origin -- to which a $2$--handle $H_2$ is attached along an unknot with framing $+1.$ Finally a $4$--handle $H_4$ is attached  to the $4$--manifold, which is the union of the $0$--handle $B^4(0,2)$ and the $2$--handle $H_2$. Regarding $H_0$ as a ball.  Let 
$\mathbb{S}^3 \times [1,2]$ be a collar of $\partial H_0.$ Let $U \times \{2\}$ be the 
attaching circle of $H_2.$  Observe  that
any Hopf link consisting of a parallel copy of the attaching circle -- say $l_1 \times \{2\}$ and a 
circle $l_2 \times \{2\}$
which links both the attaching circle and $l_1$ once as depicted in Figure~\ref{fig:kirby_diagram_CP^2} constitute a Hopf link which is separable.  This is because
$l_1 \times \{2\}$ bounds a parallel copy of the core of $2$--handle, and $l_2 \times \{2\}$ bound
a disk in the unique $4$--handle.

Next, consider cylinders $l_i \times [\frac{3}{2}, 2], i = 1,2.$ They intersect 
$\mathbb{S}^3 \times \{\frac{3}{2}\}$ in $ l_i \times \{\frac{3}{2}\}.$  Observe that there exists
a surface $\Sigma_g$ with two boundary component whose boundary is  the Hopf link 
$l_1 \times \{\frac{3}{2}\} \sqcup l_2 \times \{\frac{3}{2}\}.$ 
See Figure~\ref{fig:kirby_diagram_CP^2}. It follows from an argument similar to the one used
in establishing Lemma~\ref{lem:embedding_stand_flexible} that the embedding is both flexible 
and in a standard position. 

Regarding the standard $\C P^1$ as the union of core of $2$--handle $H_2$ with a disk $\mathbb{D}$ that $U \times \{2\}$ bounds, we see that the embedded $\Sigma_g$ intersects $\mathbb{C}P^1$ in 
a pair of points. This pair has to be algebraically cancelling as we can push the disk $\mathbb{D}$
down to produce an isotopy of $\mathbb{C}P^1$ that sends the $\mathbb{C}P^1$ to a new
$\mathbb{C}P^1$ which consist of union of core of $H_2,$ $U \times [1,2],$ and 
a disk $\mathbb{D}$ that $U \times\{ 1\}$ bounds. The disk that $U \times \{1\}$ bounds 
is denoted by a blue disk in Figure~\ref{fig:kirby_diagram_CP^2}. Notice that the isotoped $\mathbb{C}P^1$ is disjoint from $\Sigma_g$ implying that the algebraic intersection of $\Sigma_g$ with the standard $\mathbb{C}P^1$ is zero.

This completes our argument.

\end{proof}

Now we have established all the results necessary to establish Theorem~\ref{thm:embedding_4-manifolds}. We now proceed and supply a proof of Theorem~\ref{thm:embedding_4-manifolds}.

\subsection{Proof of Theorem~\ref{thm:embedding_4-manifolds}}
Recall that we need to prove that every smooth orientable closed $4$--manifold admits an
embedding in $\C P^3.$

\begin{proof}[Proof of Theorem~\ref{thm:embedding_4-manifolds}]

Let $M$ be the given closed orientable $4$--manifold. Consider the manifold 
$\widehat{M} =M \# \mathbb{C}P^2 \# \overline{\mathbb{C}P^2}$ thought as a blow-up of $M$ done at two distinct
points $p_1$ and $p_2.$  Recall  that $\widehat{M}$ admits a pair of 
exceptional divisors 
-- say $E_1$ and $E_{-1}$  such that $E_1 \cap E_1 = 1$ while $E_{-1} \cap E_{-1} = -1.$

Next, apply Lemma~\ref{lem:existence_of_special_SBLF} to produce a SBLF on 
$\widehat{M}$ which satisfies the following:

\begin{enumerate}
\item The lower genus fiber  has its genus bigger than $1.$
\item The fibration agrees with the standard fibration in a tubular neighborhood of both
exception spheres $E_1$ and $E_{-1}.$
\end{enumerate}

Now, by Proposition~\ref{prop:embedding_in_CP^2_times_CP^1} there exist SBLF  embedding of
$\widehat{M}$ in $\mathbb{C}P^2 \times \mathbb{C}P^1.$ 
Since $\C P^2 \times \C P^1$ is diffeomorphic to $\mathcal{B}_{\C P^1} (\C P^3),$ 
we get an embedding of $M \# \C P^2  \# \overline{\C P^2}$ in $\mathcal{B}_{\C P^1}( \C P^3).$

Also notice that the intersection property of the embedded fiber of SLBF with 
with standard $\mathbb{C}P^1$ contained in $\mathbb{C}P^2$ stated in  Proposition~\ref{prop:embedding_in_CP^2_times_CP^1} 
implies that the  embedding is such that each fiber of the SBLF associated to 
$M \# \C P^2 \# \overline{\C P^2}$ intersects  the standard $\mathbb{C}P^1$ of a fiber $\mathbb{C}P^2$ of the trivial fibration  $\mathcal{B}_{\C P^1} (\C P^3) \rightarrow \mathbb{C}P^1$  in a  pair of 
algebraically cancelling points.  

Finally, blow-down $\mathcal{B}_{\mathbb{C}P^1}(\C P^3)$ along its exceptional divisor. Observe that
Lemma~\ref{lem:SBLP_embedding} implies that blow-down produces an embedding
of $M$ in $\C P^3$ such that the standard Lefschetz pencil of $\C P^3$ induces 
a SBLP on $M.$

\end{proof}

\bibliographystyle{amsplain}

\end{document}